\titleformat{\subsubsection}[runin]{\normalfont\bfseries}{\thesubsubsection}{0.5em}{}[.]
\newtheorem*{thm*}{Theorem}
\newtheorem{thm}{Theorem}[section]
\newtheorem{prp}[thm]{Proposition}
\newtheorem{lmm}[thm]{Lemma}
\theoremstyle{definition}
\theoremstyle{remark}
\numberwithin{equation}{section}
\def\lra{\longrightarrow}
\def\xlra#1{\xrightarrow{#1}}
\def\BE#1{\begin{equation}\label{#1}}
\def\EE{\end{equation}}
\def\lr#1{\langle#1\rangle}
\def\blr#1{\big\langle#1\big\rangle}
\def\dbsqbr#1{\llbracket{#1}\rrbracket}
\def\wt#1{\widetilde{#1}}
\def\ov#1{\overline{#1}}
\def\eref#1{(\ref{#1})}
\def\tn#1{\textnormal{#1}}
\def\sf#1{\textsf{#1}}
\def\Ga{\Gamma}
\def\al{\alpha}
\def\be{\beta}
\def\ga{\gamma}
\def\la{\lambda}
\def\th{\theta}
\def\vph{\varphi}
\def\C{\mathbb C}
\def\bE{\mathbb E}
\def\bF{\mathbb{F}}
\def\bI{\mathbb I}
\def\cI{\mathcal I}
\def\cN{\mathcal N}
\def\P{\mathbb P}
\def\R{\mathbb{R}}
\def\Z{\mathbb{Z}}
\def\fc{\mathfrak c}
\def\Aut{\tn{Aut}}
\def\au{\tn{au}}
\def\BL{\mathbb{BL}}
\def\BLau{\BL^{\!\!\fc_1}}
\def\BLauk{\BL^{\!\!\fc_1,k}}
\def\BLaua{\BL^{\!\!\fc_1,1}}
\def\BLaub{\BL^{\!\!\fc_1,2}}
\def\BLauba{\BL^{\!\!\fc_1,2;1}}
\def\BLC{\BL^{\!\!\C}}
\def\BLF{\BL^{\!\!\bF}}
\def\BLR{\BL^{\!\!\R}}
\def\nd{\tn{d}}
\def\End{\tn{End}}
\def\Hom{\tn{Hom}}
\def\id{\tn{id}}
\def\so{\tn{so}}
\def\SO{\tn{SO}}
\def\ver{\tn{ver}}
\def\i{\infty}
\def\prt{\partial}
\begin{document}

\title{Smooth Blowups: Global vs.~Local Perspectives}
\author{Aleksey Zinger\thanks{Partially supported by NSF grant DMS 2301493}}
\date{\today}
\maketitle

\begin{abstract}
\noindent   
We show that the global and local constructions of three types of blowup 
of a smooth manifold along a closed submanifold in differential topology are 
equivalent.
\end{abstract}

\tableofcontents

\section{Introduction}
\label{intro_sec}

\noindent
Blowups are surgery constructions that replace a closed submanifold~$Y$ in a smooth manifold~$X$
by a fiber bundle over~$Y$.
They play prominent roles in algebraic geometry, symplectic geometry, and differential topology.
In algebraic geometry, blowups are constructed using coordinate charts; see \cite[p603]{GH}.
In symplectic geometry, blowups are usually constructed via a tubular neighborhood identification;
see \cite[Section~8]{GS}.
In differential topology, they can be constructed using either coordinate charts,
as in the proof of \cite[Lemma~2.1]{AkKi85} and \cite[Section~3]{ArKa}, or
a tubular neighborhood identification, as at the bottom of \cite[p111]{AkKi81}.
The global approach via a tubular neighborhood identification
is quicker, less cumbersome, and more convenient 
for studying the changes in the topology under blowups, 
as in~\cite[Section~3]{Keel} and~\cite[Section~3.2]{RDMhomol}.
However, the local approach via coordinate charts
may be more suitable in settings with natural collections of 
coordinate charts, as is the case in~\cite[Sections~4,5]{RDMbl}, for example.
It also applies in the real analytic category and leads to functorial properties
for the three kinds of blowups in the spirit of \cite[Sections~4,5]{ArKa}.
In this paper, we show that the global and local approaches to blowup constructions
in differential topology are equivalent.\\

\noindent
Throughout this paper, $X$ denotes a smooth manifold and $Y\!\subset\!X$ 
a closed submanifold of (real or complex) codimension~$\fc$ 
($Y$ is closed as a topological subspace of~$X$, is without boundary, 
but may be non-compact if $X$ is not compact).
We consider three types of blowups of~$X$ along~$Y$,
\BE{blowdownmaps_e}\pi\!:\BLR_YX\lra X, \qquad \pi\!:\BLC_YX\lra X, 
\qquad\hbox{and}\qquad \pi\!:\BL_{\cN^{\fc_1}}X\lra X,\EE
which we call the \sf{real}, \sf{complex}, and \sf{$\cN^{\fc_1}$-augmented} 
(or just $\fc_1$-augmented) blowups, respectively.
The last two blowups involve a choice of complex structure on the normal bundle~$\cN_XY$
of~$Y$ in~$X$ and a subbundle \hbox{$\cN^{\fc_1}\!\subset\!\cN_XY$} of corank~$\fc_1\!<\!\fc$,
respectively.
The three maps in~\eref{blowdownmaps_e} are surjective, proper, and smooth.
The restrictions
\begin{gather}
\label{standdiff_e}
\pi\!:\BLR_YX\!-\!\pi^{-1}(Y)\lra X\!-\!Y, \qquad 
\pi\!:\BLC_YX\!-\!\pi^{-1}(Y)\lra X\!-\!Y,\\
\label{augdiff_e}
\hbox{and}\qquad 
\pi\!:\BL_{\cN^{\fc_1}}X\!-\!\pi^{-1}(Y)\lra X\!-\!Y
\end{gather}
are diffeomorphisms.
The local constructions of the complex and augmented blowups depend 
on the choice of a compatible collection of local charts for~$X$ that cover~$Y$.
Such a collection determines a complex structure on~$\cN_XY$ in the complex case 
and a distinguished subbundle $\cN^{\fc_1}\!\subset\!\cN_XY$ 
in the augmented case.
The global constructions of these two blowups depend 
on the choice of a tubular neighborhood identification.
The global construction of an augmented blowup is actually two real blowups,
first along~$Y$ and then along $\R\P\cN^{\fc_1}\!\subset\!\R\P(\cN_XY)$,
followed by a real blowdown in a different direction.\\

\noindent
The restrictions of~$\pi$ to the \sf{exceptional loci}
$$\pi\!:\bE_Y^{\R}X\!\equiv\!\pi^{-1}(Y)\lra Y \qquad\hbox{and}\qquad
\pi\!:\bE_Y^{\C}X\!\equiv\!\pi^{-1}(Y)\lra Y$$
in the first two cases
are isomorphic to the real and complex projectivizations, 
$\R\P(\cN_XY)$ and $\C\P(\cN_XY)$, respectively, of~$\cN_XY$.
The normal bundles to the exceptional loci in these two cases are
isomorphic to the real and complex tautological line bundles:
\BE{cNEisom_e}\cN_{\BLR_YX}\bE_Y^{\R}X\approx\ga_{\cN_XY}^{\R}\lra\R\P(\cN_XY)
\quad\hbox{and}\quad
\cN_{\BLC_YX}\bE_Y^{\C}X\approx\ga_{\cN_XY}^{\C}\lra\C\P(\cN_XY)\,.\EE
For $k\!=\!0,1$, let \hbox{$\tau_Y^k\!\lra\!Y$} be the trivial rank~$k$ real line bundle.
In the last case in~\eref{blowdownmaps_e}, the exceptional locus
$$\bE_{\cN^{\fc_1}}X\equiv\pi^{-1}(Y)\subset\wt{X}$$ 
is the union of two closed submanifolds, $\bE^0_{\cN^{\fc_1}}X$ and~$\bE^-_{\cN^{\fc_1}}X$, so that 
$$\big(\bE^0_{\cN^{\fc_1}}X,\bE^0_{\cN^{\fc_1}}X\!\cap\!\bE^-_{\cN^{\fc_1}}X\big)\approx
\big(\R\P\big(\cN_YX/\cN^{\fc_1}\!\oplus\!\tau_Y^1\big),
\R\P\big(\cN_YX/\cN^{\fc_1}\!\oplus\!\tau_Y^0\big)\big)$$
as fiber bundle pairs over~$Y$, while 
$$\pi\!:\bE^-_{\cN^{\fc_1}}X\lra Y$$
factors through an $S^{\fc-\fc_1}$-fiber bundle 
$\bE^-_{\cN^{\fc_1}}X\!\lra\!\bE^0_{\cN^{\fc_1}}X\!\cap\!\bE^-_{\cN^{\fc_1}}X$.

\begin{thm*}\label{main_thm}
Let $X$ be a smooth manifold and $Y\!\subset\!X$ be a closed submanifold of codimension~$\fc$.
\begin{enumerate}[label=(\arabic*),leftmargin=*]

\item\label{RCthm_it} The global construction of real and complex blowups in
Section~\ref{glRCbl_subs} is equivalent to the local construction in Section~\ref{locRCbl_subs}.

\item\label{augthm_it} If in addition $\cN^{\fc_1}\!\subset\!\cN_XY$ is a subbundle
of corank~$\fc_1\!<\!\fc$,
the global construction of $\cN^{\fc_1}$-augmented blowup in
Section~\ref{glaugbl_subs} is equivalent to the local construction in Section~\ref{locaugbl_subs}.

\end{enumerate}

\end{thm*}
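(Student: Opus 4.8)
The plan is to exhibit, in each case, a diffeomorphism~$\Phi$ between the globally and locally constructed total spaces that intertwines the two blowdown maps to~$X$ and respects the exceptional loci and the normal data recorded in~\eref{cNEisom_e} and its augmented analogue. Since the restrictions of~$\pi$ over $X\!-\!Y$ in~\eref{standdiff_e} and~\eref{augdiff_e} are diffeomorphisms, both total spaces are canonically identified with $X\!-\!Y$ over the complement of~$Y$; any $\Phi$ commuting with~$\pi$ is forced to equal this canonical identification there and is therefore determined by its behaviour over a neighborhood of~$Y$. It thus suffices to produce a $\pi$-compatible diffeomorphism over a single neighborhood~$W$ of~$Y$ on which both constructions are defined: since it covers~$\id_W$ and $\pi$ is a diffeomorphism over $W\!-\!Y$, such a map automatically restricts to the canonical identity over $W\!-\!Y$ and hence glues with the identity over $X\!-\!Y$ to a global~$\Phi$, with no partition of unity required.

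For the comparison over~$W$ I would first isolate the linear model: for a vector space~$V$ with a distinguished subspace, the real and complex blowups along that subspace are canonically the total spaces of the tautological line bundles over $\R\P(V)$ and~$\C\P(V)$, while the $\fc_1$-augmented model is obtained by applying the two real blowups and the subsequent blowdown to this linear picture. Both the global and the local constructions are assembled by gluing copies of this model. The key first observation is that a tubular neighborhood identification~$\psi$, composed with local trivializations of~$\cN_XY$ and charts for~$Y$, supplies a compatible collection of charts for~$X$ in which~$Y$ is a coordinate subspace and the transition maps are \emph{linear in the normal directions}, inducing precisely the given complex structure (resp.\ subbundle~$\cN^{\fc_1}$). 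For this distinguished collection the local and global constructions coincide, since both are manifestly the fiberwise blowup of~$\cN_XY$ glued by the identical normal-linear transition data.

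It then remains to prove that the local construction is independent of the choice of compatible charts inducing the fixed normal data, and symmetrically that the global construction is independent of~$\psi$. Both statements reduce to a single functoriality property of the Euclidean blowup: a diffeomorphism~$f$ between open subsets of $\R^{\fc}\!\times\!\R^{m}$, $m\!=\!\dim Y$, preserving the subspace $\{0\}\!\times\!\R^{m}$ (and, in the complex and augmented cases, the prescribed normal structure) lifts canonically to a diffeomorphism~$\wt{f}$ of the blowups, equal to~$f$ off the exceptional divisor and given on it by the projectivization of the fiberwise normal differential of~$f$ along~$Y$. Granting this, any two compatible chart collections, and likewise any two tubular neighborhoods, are related by such structure-preserving diffeomorphisms, whose induced lifts patch to the desired identification; combined with the previous paragraph this produces~$\Phi$ and proves part~\ref{RCthm_it}.

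The genuine obstacle is exactly this functoriality: verifying that~$\wt{f}$ extends \emph{smoothly} across the exceptional divisor and is invertible there. Off the divisor $\wt{f}$ is just~$f$, so only the behaviour transverse to~$Y$ is at issue. In blowup coordinates one normal coordinate serves as a scaling parameter~$t$, with $t\!=\!0$ cutting out the divisor, and the remaining normal coordinates become $t$~times affine direction coordinates; by Hadamard's lemma the normal components of~$f$, which vanish along~$Y$, are each divisible by~$t$, so the projective direction assigned by~$\wt{f}$ is a smooth function of the blowup coordinates whose value at $t\!=\!0$ is the projectivized normal differential. Smoothness and invertibility of~$\wt{f}$ follow at once, and in the complex case the normal-holomorphicity of the charts keeps this direction in~$\C\P$. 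For part~\ref{augthm_it} this computation must be propagated through the tower: after handling the first real blowup along~$Y$ as above, one repeats the argument for the blowup along $\R\P\cN^{\fc_1}$ and finally checks that the lift descends under the concluding real blowdown in a different direction. The new geometric input at each stage is minimal; the laborious part is the bookkeeping that keeps the stagewise lifts compatible with the intermediate exceptional loci.
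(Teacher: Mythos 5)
Your reduction hinges on a ``functoriality of the Euclidean blowup'' that is true only for $\bF\!=\!\R$. Hadamard's lemma does write the normal components of a diffeomorphism $f$ preserving $0^{\fc}\!\times\!\R^m$ as $h(r,s)r$ with $h$ a smooth family of \emph{real}-linear maps; this is exactly why every collection of charts for $Y$ in $X$ is an $\R$-atlas and any two tubular neighborhood identifications are $\R$-equivalent, so your argument is fine for the real blowup. For $\bF\!=\!\C$, however, the lift requires $h$ to take values in $\End_{\C}(\C^{\fc})$ --- condition \eref{halalprdfn_e} --- and this does \emph{not} follow from $f$ preserving $Y$ and having complex-linear normal differential along $Y$. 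Concretely, take $X\!=\!\C^2$, $Y\!=\!\{0\}$, and
\begin{equation*}
f(z_1,z_2)=\big(z_1\!+\!\bar{z}_2^{\,2},\,z_2\big),
\end{equation*}
so that $f(0)\!=\!0$ and $\nd_0f\!=\!\id$. The singleton atlases $\{\id\}$ and $\{f\}$ are both $\C$-atlases for $Y$ in $X$ inducing the \emph{same} complex structure on $\cN_XY$, yet the identification of the two local blowups forced over $X\!-\!Y$ by compatibility with the blowdown maps is, in the chart $z_1\!=\!\la z_2$, the map $(\la,z_2)\!\mapsto\!(\la\!+\!\bar{z}_2^{\,2}/z_2,\,z_2)$, which is not even differentiable at $z_2\!=\!0$. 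Hence these two atlases determine complex blowups that are \emph{not} isomorphic over~$X$, and both independence statements your third paragraph rests on --- independence of the local construction on the atlas with fixed normal data, and of the global construction on $\psi$ with fixed complex structure --- are false. The same obstruction defeats the augmented case, where moreover the conformality condition in \eref{augatlcond_e} and the inner-product (conformal class) on $\cN^{\fc_1}$ --- data your sketch never mentions, though the global construction cannot be run without them --- are likewise not consequences of first-order normal data.

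Because of this, the hard direction of the theorem cannot be done chart by chart: given a $\C$-atlas or an $\cN^{\fc_1}$-augmented atlas, one must manufacture a \emph{single} tubular neighborhood identification all of whose cut-out charts satisfy the factorization (and, in the augmented case, conformality) conditions relative to the given atlas. This is what Propositions \ref{Loc2GlRC_prp} and \ref{Loc2Glaug_prp} accomplish, and it is precisely a partition-of-unity-type argument: tubular neighborhood identifications cut from individual charts are merged pairwise over overlaps by interpolating with a bump function through an exponential-type map and invoking the Inverse Function Theorem (Lemmas \ref{Loc2GlRCmerge_lmm} and \ref{Loc2Glaugmerge_lmm}). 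Your explicit claim that ``no partition of unity is required'' is therefore exactly where the proposal breaks down. The parts of your plan that do work --- the reduction to a neighborhood of $Y$, and cutting a tubular neighborhood identification into charts with normal-linear transitions, for which both constructions visibly agree (the paper's Sections \ref{equivRCbl_subs} and \ref{equivaugbl_subs} together with Lemmas \ref{StanBl_lmm} and \ref{AugBl_lmm}) --- constitute only the easy direction, plus a complete argument in the case $\bF\!=\!\R$.
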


\noindent
It is straightforward to cut up a tubular neighborhood identification (with additional structure)
used to construct a blowup \hbox{$\pi\!:\wt{X}\!\lra\!X$}
as in Section~\ref{glRCbl_subs} or~\ref{glaugbl_subs} into a compatible collection
of coordinate charts (with additional structure) that determines 
a blowup \hbox{$\pi'\!:\wt{X}'\!\lra\!X$}
as in Section~\ref{locRCbl_subs} or~\ref{locaugbl_subs}, respectively.
Lemma~\ref{StanBl_lmm} or~\ref{AugBl_lmm}, as appropriate, then provides a diffeomorphism
\BE{wtPsidfn_e}\wt\Psi\!:\wt{X}\lra\wt{X}' \qquad\hbox{s.t.}\quad 
\pi\!=\!\pi'\!\circ\!\wt\Psi\!:\wt{X}\lra X\,,\EE
i.e.~the two blowups are \sf{isomorphic}.
It is more technical to assemble a compatible collection of coordinate charts used to construct
a blowup \hbox{$\pi'\!:\wt{X}'\!\lra\!X$} in Section~\ref{locRCbl_subs} or~\ref{locaugbl_subs}
into a tubular neighborhood identification
suitable for constructing a blowup \hbox{$\pi\!:\wt{X}\!\lra\!X$};
this is done via Proposition~\ref{Loc2GlRC_prp} or~\ref{Loc2Glaug_prp}, as appropriate.
A tubular neighborhood identification obtained in this way can be cut up into 
coordinate charts that are compatible with the original charts and thus determine
a blowup isomorphic to \hbox{$\pi'\!:\wt{X}'\!\lra\!X$}.
By Lemma~\ref{StanBl_lmm} or~\ref{AugBl_lmm},  the blowup determined by the new charts is also
isomorphic to \hbox{$\pi\!:\wt{X}\!\lra\!X$}. 
This establishes the theorem.
Sections~\ref{locRCbl_subs} and~\ref{locaugbl_subs} in the present paper
are essentially identical to the main parts of 
Sections~3.1 and~3.2, respectively, in~\cite{RDMbl};
we include them for the ease of~use.

\section{Real and complex blowups}
\label{RCbl_sec}

\noindent
For $m\!\in\!\Z^{\ge0}$, we define $[m]\!=\!\{1,\ldots,m\}$.
Let $\fc\!\in\!\Z^+$ and $\bF\!=\!\R,\C$.
We denote~by
$$\ga^{\bF}_{\fc}\equiv\big\{\!(\ell,v)\!\in\!\bF\P^{\fc-1}\!\times\!\bF^{\fc}\!:
v\!\in\!\ell\!\subset\!\bF^{\fc}\big\}$$
the \sf{tautological line bundle} over the $\bF$-projective space $\bF\P^{\fc-1}$.
For each $i\!\in\![\fc]$, let
\begin{gather}\label{vphfcidfn_e}
\wt\vph_{\fc;i}\!\equiv\!
\big(\wt\vph_{\fc;i;1},\ldots,\wt\vph_{\fc;i;\fc}\big)\!:
\wt{U}_{\fc;i}^{\bF}\!\equiv\!\big\{\!
\big([r_1,\ldots,r_{\fc}],v\big)\!\in\!\ga^{\bF}_{\fc}\!:
r_i\!\neq\!0\big\}\lra\bF^{\fc},\\
\notag
\wt\vph_{\fc;i;j}\big([r_1,\ldots,r_{\fc}],(v_1,\ldots,v_{\fc})\!\big)
=\begin{cases}r_j/r_i,&\hbox{if}~j\!\in\![\fc]\!-\!\{i\};\\
v_i,&\hbox{if}~j\!=\!i;
\end{cases}
\end{gather}
be the $i$-th standard coordinate chart on~$\ga^{\bF}_{\fc}$.\\

\noindent
For an $\bF$-vector bundle $\pi_{\cN}\!:\cN\!\lra\!Y$,
we denote by~$\bF\P\cN$ the $\bF$-projectivization of~$\cN$ and~by 
\BE{gacNbFdfn_e}\ga_{\cN}^{\bF}\equiv
\big\{\!(\ell,v)\!\in\!\bF\P\cN\!\times\!\cN\!:v\!\in\!\ell\big\}
\lra\bF\P\cN\EE
the $\bF$-tautological line bundle.
If the $\bF$-rank of~$\cN$ is~$r$, then $\bF\P\cN\!\lra\!Y$ is a smooth $\bF\P^{r-1}$-bundle.
We identify $\ga_{\cN}^{\bF}\!-\!\bF\P\cN$ with $\cN\!-\!Y$ via the projection 
to the second component; this is used in~\eref{BRREdiag_e}.

\subsection{Global construction}
\label{glRCbl_subs}

\noindent
Let \hbox{$\pi_{\cN_XY}\!:\cN_XY\!\lra\!Y$} be the normal bundle of~$Y$ in~$X$ and
$$\pi_Y^{\perp}\!: TX|_Y\lra\frac{TX|_Y}{TY}\!\equiv\!\cN_XY$$
be the quotient projection.
A \sf{tubular neighborhood identification} for $Y\!\subset\!X$ 
\sf{over an open subset \hbox{$U_Y\!\subset\!Y$}} is 
a pair $(W_Y,\Psi_Y)$ consisting of an open neighborhood~$W_Y\!\subset\!\cN_XY$ of~$U_Y$
and a diffeomorphism
\BE{tubneighbdfn_e}\Psi_Y\!:W_Y\lra U_X\EE
onto an open neighborhood $U_X\!\subset\!X$ of~$U_Y$
so that $Y\!\cap\!W_Y\!=\!U_Y$, $\Psi_Y|_{U_Y}\!=\!\id_{U_Y}$, and the homomorphism
$$T(\cN_XY)\big|_{U_Y} \xlra{\nd\Psi_Y} TX|_{U_Y}
\xlra{\pi_Y^{\perp}} \frac{TX|_Y}{TY}\big|_{U_Y}\!\equiv\!\cN_XY\big|_{U_Y}$$
restricts to the identity homomorphism on \hbox{$\cN_XY|_{U_Y}\!\subset\!T(\cN_XY)|_{U_Y}$}.
Such a diffeomorphism~$\Psi_Y$ exists by the Tubular Neighborhood Theorem.
We call a tubular neighborhood identification for $Y\!\subset\!X$ over $U_Y\!=\!Y$
simply a \sf{tubular neighborhood identification} for $Y\!\subset\!X$.\\

\noindent
If $\cN_XY$ is a complex vector bundle, let $\bF\!=\!\C$;
otherwise, let $\bF\!=\!\R$.
Let $\fc$ be the $\bF$-rank of~$\cN_XY$.
For $W\!\subset\!\cN_XY$, define
$$\wt{W}^{\bF}=\big\{\!(\ell,v)\!\in\!\ga_{\cN_XY}^{\bF}\!: v\!\in\!W\big\}.$$
Given a tubular neighborhood identification~$(W_Y,\Psi_Y)$ for $Y\!\subset\!X$, define
\begin{gather*}
\BLF_YX\equiv\big(\!(X\!-\!Y)\!\sqcup\!\wt{W}_Y^{\bF}\!\big)\!\big/\!\!\sim,
\quad \wt{W}_Y^{\bF}\!-\!\bF\P(\cN_XY)\ni(\ell,v)\sim\Psi_Y(v)\in U_X\!-\!Y\subset X\!-\!Y,\\
\bE_Y^{\bF}X\equiv \bF\P(\cN_XY)\subset \BLF_YX.
\end{gather*}
The \sf{$\bF$-blowup} $\BLF_YX$ of~$X$ along~$Y$ is a smooth manifold containing 
the \sf{exceptional locus}~$\bE_Y^{\bF}X$ as a closed submanifold
with normal bundle $\ga_{\cN_XY}^{\bF}$.
The \sf{blowdown~map}
$$\pi\!: \BLF_YX\lra X, \qquad
\pi(\wt{x})=\begin{cases}\wt{x},&\hbox{if}~\wt{x}\!\in\!X\!-\!Y;\\
\Psi_Y(v),&\hbox{if}~\wt{x}\!\equiv\!(\ell,v)\!\in\!\wt{W}_Y^{\bF};
\end{cases}$$
is well-defined, surjective, proper, and smooth. The restrictions
$$\pi\!:\BLF_YX\!-\!\bE_Y^{\bF}X\lra X\!-\!Y \qquad\hbox{and}\qquad
\pi\!:\bE_Y^{\bF}X\lra Y$$
are a diffeomorphism and a smooth $\bF\P^{\fc-1}$-bundle, respectively.
Shrinking the open subset \hbox{$W_Y\!\subset\!\cN_XY$} does not change the smooth isomorphism
class of the blowdown map~$\pi$.\\

\noindent
We call tubular neighborhood identifications $(W_1,\Psi_1)$ and~$(W_2,\Psi_2)$ 
for $Y\!\subset\!X$ over open subsets $U_1\!\subset\!Y$ and $U_2\!\subset\!Y$, respectively,
\sf{$\bF$-equivalent} if there exist an open neighborhood 
\hbox{$W_{\cap}\!\subset\!\Psi_2^{-1}(\Psi_1(W_1)\!)$} of $U_1\!\cap\!U_2$ and
\BE{TNIequiv_e}\begin{split}
&h\in\Ga\big(W_{\cap},\Hom_{\bF}\big(\pi_{\cN_XY}^*\cN_XY,
\Psi_2^*\Psi_1^{-1*}\pi_{\cN_XY}^*\cN_XY\big)\!\big)\\
&\quad\hbox{s.t.}\qquad \Psi_1^{-1}\big(\Psi_2(v)\!\big)=\big\{\!h(v)\!\big\}(v)
\quad\forall\,v\!\in\!W_{\cap}\,.
\end{split}\EE
Since $\Psi_1(U_1)\!=\!U_1$ and $\Psi_2(W_2\!-\!Y)\!\subset\!X\!-\!Y$,
$\{\!h(v)\!\}(v)\!\neq\!0$ for all $v\!\in\!W_{\cap}\!-\!Y$.
By the sentence containing~\eref{tubneighbdfn_e}, such a smooth section~$h$ must satisfy
\BE{TNIprop_e}\begin{split}
\Psi_1^{-1}\!\circ\!\Psi_2\big|_{Y\cap W_{\cap}}\!=\!\id_{Y\cap W_{\cap}}\!:
Y\!\cap\!W_{\cap}&\lra Y\!\cap\!W_{\cap} \qquad\hbox{and}\\
h\big|_{Y\cap W_{\cap}}\!=\!\id_{\cN_XY|_{Y\cap W_{\cap}}}\!:
\cN_XY|_{Y\cap W_{\cap}}&\lra\cN_XY|_{Y\cap W_{\cap}}\,.
\end{split}\EE
The above determines an equivalence relation on the collection of all 
tubular neighborhood identifications for $Y\!\subset\!X$ (over $U_Y\!=\!X$).
We show below that tubular neighborhood identifications 
for $Y\!\subset\!X$ that are $\bF$-equivalent determine isomorphic $\bF$-blowups of~$X$ along~$Y$.
Since any two tubular neighborhood identifications are $\R$-equivalent,
the $\R$-blowup of~$X$ along~$Y$ as constructed above does not depend on the choice of 
tubular neighborhood identification for $Y\!\subset\!X$.\\

\noindent
Suppose $(W_Y,\Psi_Y)$ and $(W_Y',\Psi_Y')$ are tubular neighborhood identifications for $Y\!\subset\!X$
so that \hbox{$\Psi_Y'(W_Y')\!\subset\!\Psi_Y(W_Y)$} and
$h$ is in~\eref{TNIequiv_e} with $\Psi_1\!=\!\Psi_Y$, $\Psi_2\!=\!\Psi_Y'$, and $W_{\cap}\!=\!W_Y'$.
Let
$$\wt{W}_Y^{\bF},\wt{W}_Y'^{\bF}\subset\ga_{\cN_XY}^{\bF}$$
be the open subsets determined by $W_Y,W_Y'$ and
$$\pi\!: \BLF_YX\lra X \qquad\hbox{and}\qquad \pi'\!: \big(\BLF_YX\big)'\lra X$$
be the corresponding blowdown maps.
By the sentence below~\eref{TNIequiv_e} and~\eref{TNIprop_e}, the~map
\BE{wtvphalalprdfn_e0}\wt\vph\!:\wt{W}_Y'^{\bF}\lra \wt{W}_Y^{\bF},  \qquad
\wt\vph(\ell,v)=\big(\!\{h(v)\}(\ell),\{h(v)\}(v)\!\big),\EE
is well-defined and smooth.
It induces a diffeomorphism
$$\wt\Psi\!:\big(\BLF_YX\big)'\lra \BLF_YX,\qquad
 \wt\Psi\big([\wt{x}]\big)=\begin{cases}[\wt{x}],&\hbox{if}~\wt{x}\!\in\!X\!-\!Y\;\\
\big[\wt\vph(\wt{x})\!\big],&\hbox{if}~\wt{x}\!\in\!\wt{W}_Y'^{\bF};
\end{cases}$$
so that $\pi'\!=\!\pi\!\circ\!\wt\Psi$.

\subsection{Local construction}
\label{locRCbl_subs}

\noindent
Suppose $Y\!\subset\!X$ is a closed submanifold of $\bF$-codimension~$\fc$ and real dimension~$m$.
We call a coordinate chart 
$$\vph\!\equiv\!(\vph_1,\ldots,\vph_{\fc+m})\!:U\!\lra\!\bF^{\fc}\!\times\!\R^m$$
on~$X$ a \sf{chart for~$Y$ in~$X$} if 
\BE{vphYcond_e}U\!\cap\!Y=\big\{x\!\in\!U\!:
\vph_1(x),\ldots,\vph_{\fc}(x)\!=\!0\big\}.\EE
For such a chart, the subspace
$$\BLF_Y\vph\equiv \big\{\!(\ell,x)\!\in\!\bF\P^{\fc-1}\!\times\!U\!:
\big(\vph_1(x),\ldots,\vph_{\fc}(x)\!\big)\!\in\!\ell\!\subset\!\bF^{\fc}\big\}$$
of $\bF\P^{\fc-1}\!\times\!U$ is a closed submanifold.
We denote by 
$\pi_{\bF\P^{\fc-1}},\pi_U\!:\BLF_Y\vph\!\lra\!\bF\P^{\fc-1},U$
the two projections.
The latter restricts to a diffeomorphism
$$\pi_U\!:\big\{\!(\ell,x)\!\in\!\bF\P^{\fc-1}\!\times\!U\!:x\!\not\in\!Y\big\}\lra U\!-\!Y.$$
For each $i\!\in\![\fc]$, the smooth map
\begin{gather}\label{BLFchart_e}
\wt\vph_i\!\equiv\!(\wt\vph_{i;1},\ldots,\wt\vph_{i;\fc+m})\!:
\BLF_{Y;i}\vph\!\equiv\!\big\{\!([r_1,\ldots,r_{\fc}],x)\!\in\!\BLF_Y\vph\!:
r_i\!\neq\!0\big\}
\lra\bF^{\fc}\!\times\!\R^m,\\
\notag
\wt\vph_{i;j}=\begin{cases}
\wt\vph_{\fc;i;j}\!\circ\!\pi_{\bF\P^{\fc-1}},
&\hbox{if}~j\!\in\![\fc]\!-\!\{i\};\\
\vph_j\!\circ\!\pi_U,
&\hbox{if}~j\!\in\!\{i\}\!\cup\!\big([\fc\!+\!m]\!-\![\fc]\big);
\end{cases}
\end{gather}
is a coordinate chart on~$\BLF_Y\vph$.\\

\noindent
Let $\{\vph_{\al}\!:U_{\al}\!\lra\!\bF^{\fc}\!\times\!\R^m\}_{\al\in\cI}$ be a collection
of charts for~$Y$ in~$X$.
For $\al,\al'\!\in\!\cI$, let
$$\vph_{\al\al'}\!\equiv\!\big(\vph_{\al\al';1},\ldots,\vph_{\al\al';\fc+m}\big)
\!\equiv\!\vph_{\al}\!\circ\!\vph_{\al'}^{-1}\!:
\vph_{\al'}\big(U_{\al}\!\cap\!U_{\al'}\big)\lra \vph_{\al}\big(U_{\al}\!\cap\!U_{\al'}\big)$$
be the overlap map between the charts $\vph_{\al}$ and $\vph_{\al'}$.
By~\eref{vphYcond_e},
\BE{Rslice_e}
\vph_{\al\al'}^{~-1}\big(0^{\fc}\!\times\R^m\big)
=\vph_{\al'}\big(U_{\al}\!\cap\!U_{\al'}\big)\!\cap\!\big(0^{\fc}\!\times\!\R^m\big).\EE
We call $\{\vph_{\al}\}_{\al\in\cI}$ an \sf{$\bF$-atlas for~$Y$ in~$X$}
if the domains~$U_{\al}$ of~$\vph_{\al}$ cover~$Y$ and
 for all $\al,\al'\!\in\cI$ there exists a smooth map 
\BE{halalprdfn_e}\begin{split}
&\hspace{1in}
h_{\al\al'}\!:\vph_{\al'}\big(U_{\al}\!\cap\!U_{\al'}\big)\lra\End_{\bF}(\bF^{\fc})
\qquad\hbox{s.t.}\\
&\big(\vph_{\al\al';1}(r,s),\ldots,\vph_{\al\al';\fc}(r,s)\!\big)=h_{\al\al'}(r,s)r
\quad\forall\,
(r,s)\!\in\!\vph_{\al'}(U_{\al}\!\cap\!U_{\al'})\!\subset\!\bF^{\fc}\!\times\!\R^m.
\end{split}\EE
By~\eref{Rslice_e}, 
$$h_{\al\al'}(r,s)r\neq0 \quad\forall~
(r,s)\!\in\!\vph_{\al'}\big(U_{\al}\!\cap\!U_{\al'}\big),~
r\!\in\!\bF^{\fc}\!-\!\{0\},~s\!\in\!\R^m.$$
Since $h_{\al\al'}(0,s)$ is the restriction of 
$\nd_{(0,s)}(\vph_{\al\al';1},\ldots,\vph_{\al\al';\fc})$ to 
$$\R^{\fc}\!\!\times\!\{0\}\subset T_{(0,s)}\big(\vph_{\al'}(U_{\al}\!\cap\!U_{\al'})\!\big),$$
$h_{\al\al'}$ is an isomorphism along the right-hand side in~\eref{Rslice_e}.
Thus, the~map
\BE{wtvphalalprdfn_e}\wt\vph_{\al\al'}\!:
\BLF_Y\big(\vph_{\al'}|_{U_{\al}\cap U_{\al'}}\big)
\lra \BLF_Y\big(\vph_{\al}|_{U_{\al}\cap U_{\al'}}\big), \quad
\wt\vph_{\al\al'}(\ell,x)=\big(h_{\al\al'}(\vph_{\al'}(x)\!)\ell,x\big),\EE
is well-defined and smooth.
It satisfies $\pi_{U_{\al'}}|_{U_{\al}\cap U_{\al'}}\!=\!\pi_{U_{\al}}\!\circ\!\wt\vph_{\al\al'}$.
By the uniqueness of continuous extensions and the cocycle condition
for the overlap maps~$\vph_{\al\al'}$,
\BE{cocyclecond_e}
\wt\vph_{\al\al''}\!=\!\wt\vph_{\al\al'}\!\circ\!\wt\vph_{\al'\al''}\!:
\BLF_Y\big(\vph_{\al''}|_{U_{\al}\cap U_{\al'}\cap U_{\al''}}\big)
\lra \BLF_Y\big(\vph_{\al}|_{U_{\al}\cap U_{\al'}\cap U_{\al''}}\big)
~~\forall\,\al,\al',\al''\!\in\!\cI.\EE

\vspace{.15in}

\noindent
Suppose $\{\vph_{\al}\}_{\al\in\cI}$ is an $\bF$-atlas for~$Y$ in~$X$.
An  \sf{$\bF$-blowup of~$X$ along~$Y$} is the smooth manifold
\begin{gather*}
\BLF_YX\equiv\Big(\!\!(X\!-\!Y)\!\sqcup\!\bigsqcup_{\al\in\cI}\!\!\BLF_Y\vph_{\al}\!
\Big)\!\!\Big/\!\!\!\sim,\quad
\BLF_Y\vph_{\al}\ni(\ell,x)\sim x\in X\!-\!Y~\forall\,x\!\in\!U_{\al}\!-\!Y,\,\al\!\in\!\cI,\\
\BLF_Y\big(\vph_{\al'}|_{U_{\al}\cap U_{\al'}}\big)\ni(\ell,x)\sim
\wt\vph_{\al\al'}(\ell,x)\in
\BLF_Y\big(\vph_{\al}|_{U_{\al}\cap U_{\al'}}\big)
~~\forall~\al,\al'\!\in\!\cI.
\end{gather*}
The \sf{blowdown map}
\BE{blowmapdfn_e}\pi\!:\BLF_YX\lra X, \qquad
\pi\big([\wt{x}]\big)=\begin{cases} 
\wt{x},&\hbox{if}~\wt{x}\!\in\!X\!-\!Y;\\
\pi_{U_{\al}}(\wt{x}),&\hbox{if}~\wt{x}\!\in\!\BLF_Y\vph_{\al},\,\al\!\in\!\cI;
\end{cases}\EE
is well-defined, surjective, proper, and smooth.
It restricts to a diffeomorphism~\eref{standdiff_e}.
The vector bundle isomorphisms
\BE{NXYisom_e} \Phi_{\al}\!:\bF^{\fc}\!\times\!(U_{\al}\!\cap\!Y)\lra \cN_YX|_{U_{\al}\cap Y}, \quad
\Phi_{\al}\big(\!(r_1,\ldots,r_{\fc}),x\big)\lra\sum_{j=1}^{\fc}r_j
 \frac{\prt}{\prt\vph_{\al;j}}\Bigg|_x\!+\!T_xY,\EE
with $\al\!\in\!\cI$ determine a complex structure on  the normal bundle~$\cN_XY$ of~$Y$ in~$X$
in the $\bF\!=\!\C$ case.\\

\noindent
We call two $\bF$-atlases for~$Y$ in~$X$ \sf{$\bF$-equivalent} if their union is still 
a $\bF$-atlas for~$Y$ in~$X$.
Two $\bF$-atlases for~$Y$ in~$X$ that are $\bF$-equivalent determine isomorphic $\bF$-blowups 
of~$X$ along~$Y$.
If $\bF\!=\!\R$, smooth maps~$h_{\al\al'}$ as in~\eref{halalprdfn_e}
exist for any collection $\{\vph_{\al}\}_{\al\in\cI}$ of charts for~$Y$ in~$X$
and for all $\al,\al'\!\in\!\cI$ by~\eref{Rslice_e}.
It follows that any two $\R$-atlases for~$Y$ in~$X$ are $\R$-equivalent.
Thus,
the $\R$-blowup of~$X$ along~$Y$ as constructed just above does not depend on the choice of 
$\R$-atlas for~$Y$ in~$X$.

\subsection{Equivalence}
\label{equivRCbl_subs}

\noindent
We first cut up a tubular neighborhood identification $(W_Y,\Psi_Y)$ for $Y\!\subset\!X$ 
as in Section~\ref{glRCbl_subs} into a compatible collection of coordinate charts
as in Section~\ref{locRCbl_subs} determining a blowup of~$X$ along~$Y$ via 
the construction in the latter section which is isomorphic to 
the blowup determined by $(W_Y,\Psi_Y)$ via the construction in the former section.
The following statement will be used to conclude that the blowups are isomorphic.

\begin{lmm}[{\cite[Lemma~3.1]{RDMbl}}]\label{StanBl_lmm}
Suppose $X$ is a smooth manifold, 
$Y\!\subset\!X$ is a closed submanifold of $\bF$-codimension~$\fc$,
\hbox{$\pi\!:\wt{X}\!\lra\!X$} is a smooth proper map so~that  
\BE{StanBl_e}
\pi\!:\wt{X}\!-\!\pi^{-1}(Y)\lra X\!-\!Y\EE
is a diffeomorphism, and
$\{\vph_{\al}\!\equiv\!(\vph_{\al;1},\ldots,\vph_{\al;\fc+m})\!:
U_{\al}\!\lra\!\bF^{\fc}\!\times\!\R^m\}_{\al\in\cI}$ is an $\bF$-atlas of charts for~$Y$ in~$X$.
Let \hbox{$\pi'\!:\BLF_YX\!\lra\!X$} be the $\bF$-blowup of~$X$ along~$Y$
determined by this atlas as in Section~\ref{locRCbl_subs}.
If there exists a collection 
$$\big\{\wt\phi_{\al;i}\!\equiv\!
\big(\wt\phi_{\al;i;1},\ldots,\wt\phi_{\al;i;\fc+m}\big)
\!:\wt{U}_{\al;i}\!\lra\!\bF^{\fc}\!\times\!\R^m
\big\}_{\al\in\cI,i\in[\fc]}$$
of charts on~$\wt{X}$ so that 
$$\pi^{-1}(U_{\al})=\bigcup_{i=1}^{\fc}\!\wt{U}_{\al;i}\,,
~~
\vph_{\al;j}\!\circ\!\pi|_{\wt{U}_{\al;i}}=\begin{cases}
\wt\phi_{\al;i;j}\!\cdot\!\{\vph_{\al;i}\!\circ\!\pi|_{\wt{U}_{\al;i}}\},
&\hbox{if}~i\!\in\![\fc],\,j\!\in\![\fc]\!-\!\{i\};\\
\wt\phi_{\al;i;j},&\hbox{if}~i\!\in\![\fc],\,
j\!\in\!\{i\}\!\cup\!\big([\fc\!+\!m]\!-\![\fc]);
\end{cases}$$
for every~$\al\!\in\!\cI$, then the maps $\pi$ and $\pi'$ are diffeomorphic.
\end{lmm}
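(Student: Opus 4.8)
The plan is to assemble the desired diffeomorphism from the two given systems of charts and then promote it to a global diffeomorphism by a properness argument. Since the restriction~\eref{StanBl_e} of~$\pi$ and the restriction~\eref{standdiff_e} of~$\pi'$ are diffeomorphisms onto $X\!-\!Y$, the composition
$$\wt\Psi_0\equiv(\pi')^{-1}\!\circ\!\pi\!:\wt{X}\!-\!\pi^{-1}(Y)\lra\BLF_YX\!-\!\pi'^{-1}(Y)$$
is a diffeomorphism satisfying $\pi\!=\!\pi'\!\circ\!\wt\Psi_0$. The whole content of the lemma is thus that $\wt\Psi_0$ extends to a diffeomorphism $\wt\Psi\!:\!\wt{X}\!\lra\!\BLF_YX$; any such extension automatically satisfies $\pi\!=\!\pi'\!\circ\!\wt\Psi$ by continuity, $\wt{X}\!-\!\pi^{-1}(Y)$ being dense.

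First I would describe the standard charts~\eref{BLFchart_e} concretely. Writing a point of $\bF^{\fc}\!\times\!\R^m$ as $(t,w,s)$ with $t\!\in\!\bF^{\fc-1}$, $w\!\in\!\bF$ in the $i$-th $\bF$-slot, and $s\!\in\!\R^m$, and letting $(wt;w;s)$ denote the point whose $i$-th $\bF$-coordinate is~$w$, whose other $\bF$-coordinates are the~$wt_j$, and whose $\R^m$-coordinates are~$s$, the definitions~\eref{vphfcidfn_e} and~\eref{BLFchart_e} show that $\wt\vph_{\al;i}$ is a diffeomorphism from $\BLF_{Y;i}\vph_{\al}$ onto the open set
$$W_{\al;i}=\big\{(t,w,s)\!\in\!\bF^{\fc-1}\!\times\!\bF\!\times\!\R^m:(wt;w;s)\!\in\!\vph_{\al}(U_{\al})\big\},$$
sending $(\ell,x)$ to the triple $(t,w,s)$ with $\vph_{\al}(x)\!=\!(wt;w;s)$. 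The hypothesized identities say \emph{precisely} that $\wt\phi_{\al;i}$ sends $\wt{x}\!\in\!\wt{U}_{\al;i}$ to the triple $(t,w,s)$ with $\vph_{\al}(\pi(\wt{x}))\!=\!(wt;w;s)$; as $\pi(\wt{x})\!\in\!U_{\al}$, this lands in $W_{\al;i}$, so
$$\wt\Psi_{\al;i}\equiv\wt\vph_{\al;i}^{\,-1}\!\circ\!\wt\phi_{\al;i}\!:\wt{U}_{\al;i}\lra\BLF_{Y;i}\vph_{\al}$$
is a well-defined diffeomorphism onto an open subset. Moreover, for $\wt{x}\!\in\!\wt{U}_{\al;i}$ with $\pi(\wt{x})\!\not\in\!Y$ the value $w\!=\!\vph_{\al;i}(\pi(\wt{x}))$ is nonzero: otherwise all of $\vph_{\al;1}(\pi(\wt{x})),\ldots,\vph_{\al;\fc}(\pi(\wt{x}))$ vanish and $\pi(\wt{x})\!\in\!Y$ by~\eref{vphYcond_e}. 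Hence $\wt\Psi_{\al;i}(\wt{x})$ and $\wt\Psi_0(\wt{x})$ lie off $\pi'^{-1}(Y)$ and share the image $\pi(\wt{x})$ under~$\pi'$, so they agree: $\wt\Psi_{\al;i}\!=\!\wt\Psi_0$ on $\wt{U}_{\al;i}\!-\!\pi^{-1}(Y)$.

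I would then glue. By the displayed identities, $\pi^{-1}(Y)\!\cap\!\wt{U}_{\al;i}\!=\!\{\wt\phi_{\al;i;i}\!=\!0\}$ is a coordinate slice, hence nowhere dense in~$\wt{U}_{\al;i}$, so $\wt{U}_{\al;i}\!-\!\pi^{-1}(Y)$ is dense in~$\wt{U}_{\al;i}$. The maps $\wt\Psi_{\al;i}$ and the diffeomorphism $\wt\Psi_0$ therefore agree on a dense subset of every overlap and, the target being Hausdorff, agree on all overlaps. Since $\pi^{-1}(Y)\!\subset\!\bigcup_{\al}\pi^{-1}(U_{\al})\!=\!\bigcup_{\al,i}\wt{U}_{\al;i}$, the map $\wt\Psi_0$ together with the $\wt\Psi_{\al;i}$ covers~$\wt{X}$ and glues to a smooth local diffeomorphism $\wt\Psi\!:\wt{X}\!\lra\!\BLF_YX$ extending~$\wt\Psi_0$, with $\pi\!=\!\pi'\!\circ\!\wt\Psi$.

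The remaining and genuinely global step is bijectivity; this is the main obstacle, because the given charts $\wt{U}_{\al;i}$ need not be the $\wt\Psi$-preimages of the standard charts $\BLF_{Y;i}\vph_{\al}$, so injectivity and surjectivity cannot be read off one chart at a time. I would argue by properness: from $\pi\!=\!\pi'\!\circ\!\wt\Psi$ and the properness of~$\pi$, each $\wt\Psi^{-1}(K)$ with $K$ compact is a closed subset of the compact set $\pi^{-1}(\pi'(K))$, so $\wt\Psi$ is proper, hence a closed map, and it is open as a local diffeomorphism. Its image is therefore clopen, and since it contains the dense set $\BLF_YX\!-\!\pi'^{-1}(Y)\!=\!\wt\Psi_0(\wt{X}\!-\!\pi^{-1}(Y))$, it is all of $\BLF_YX$, giving surjectivity. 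Finally, a proper local diffeomorphism onto a manifold is a covering map, of some finite number of sheets on each connected component; over the dense open set $\BLF_YX\!-\!\pi'^{-1}(Y)$ each fiber of~$\wt\Psi$ is a single point (there $\wt\Psi$ agrees with the bijection $\wt\Psi_0$, and no point of $\pi^{-1}(Y)$ maps off $\pi'^{-1}(Y)$), so — each component meeting this dense set — the covering is single-sheeted and $\wt\Psi$ is injective. A bijective local diffeomorphism is a diffeomorphism, and $\pi\!=\!\pi'\!\circ\!\wt\Psi$, so $\pi$ and $\pi'$ are diffeomorphic.
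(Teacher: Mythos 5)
You should first be aware that this paper never proves Lemma~\ref{StanBl_lmm}: it is imported verbatim from \cite[Lemma~3.1]{RDMbl}, so there is no in-paper proof to compare yours against, and your argument has to stand on its own. It does, as far as I can check. Your reading of the standard charts is exactly right: $\wt\vph_{\al;i}$ is a diffeomorphism from $\BLF_{Y;i}\vph_{\al}$ onto the open set $W_{\al;i}$, and the displayed hypotheses of the lemma say precisely that $\wt\vph_{\al;i}\!\circ\!\wt\Psi_{\al;i}\!=\!\wt\phi_{\al;i}$ on $\wt{U}_{\al;i}$, i.e.\ that the given charts on $\wt{X}$ are the standard blowup charts transported through the sought diffeomorphism. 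The gluing step is sound: $\pi^{-1}(Y)\!\cap\!\wt{U}_{\al;i}$ is indeed the slice $\{\wt\phi_{\al;i;i}\!=\!0\}$ by~\eref{vphYcond_e}, so agreement with $\wt\Psi_0$ on the dense complement plus Hausdorffness of $\BLF_YX$ forces agreement on all overlaps. And you correctly identified that properness of~$\pi$ is what makes the global step work; it is genuinely needed, since the lemma fails for, say, $\wt{X}$ equal to a blowup with one point of the exceptional locus deleted, which satisfies every other hypothesis.

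Two minor refinements. First, you use without comment that $\BLF_YX\!-\!\pi'^{-1}(Y)$ is dense in $\BLF_YX$; this is true because $\pi'^{-1}(Y)$ is covered by the charts $\BLF_{Y;i}\vph_{\al}$ and corresponds to the slice $\{w\!=\!0\}$ in each of them, but it deserves a sentence, as it underlies both your surjectivity and injectivity arguments. Second, the covering-map machinery can be avoided for injectivity: if $\wt\Psi(\wt{x}_1)\!=\!\wt\Psi(\wt{x}_2)\!=\!q$ with $\wt{x}_1\!\neq\!\wt{x}_2$, choose disjoint neighborhoods $V_1,V_2$ of $\wt{x}_1,\wt{x}_2$ mapped diffeomorphically onto a common neighborhood $W$ of~$q$; any $q'\!\in\!W\!-\!\pi'^{-1}(Y)$ then has two distinct $\wt\Psi$-preimages, both lying in $\wt{X}\!-\!\pi^{-1}(Y)$ because $\wt\Psi^{-1}(\pi'^{-1}(Y)\!)\!=\!\pi^{-1}(Y)$ by $\pi\!=\!\pi'\!\circ\!\wt\Psi$, contradicting injectivity of~$\wt\Psi_0$. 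Properness is then needed only for surjectivity, where your closed-image-plus-density argument already suffices; but your version via finite covering maps is also correct.
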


\noindent
Suppose $Y\!\subset\!X$ and $\fc,m\!\in\!\Z^{\ge0}$ are as in Section~\ref{locRCbl_subs}.
If $\cN_XY$ is a complex vector bundle, let $\bF\!=\!\C$;
otherwise, let $\bF\!=\!\R$.
Let $(W_Y,\Psi_Y)$ be a tubular neighborhood identification for $Y\!\subset\!X$ 
over an open subset $U_Y\!\subset\!X$. 
Choose a collection
\BE{Phialdfn_e}\Phi_{\al}\!\equiv\!\big(\Phi_{\al;1},\ldots,\Phi_{\al;\fc+m}\big)\!:
\cN_XY|_{U_{Y;\al}}\lra \bF^{\fc}\!\times\!\R^m, \qquad \al\!\in\!\cI,\EE
of $\bF$-trivializations of~$\cN_XY$ so that the open subsets $U_{Y;\al}\!\subset\!Y$
cover~$U_Y$.
In particular, 
$$U_{Y;\al}=\big\{v\!\in\!\cN_XY|_{U_{Y;\al}}\!:\Phi_{\al;1}(v),\ldots,\Phi_{\al;\fc}(v)\!=\!0\big\},
~~
\Phi_{\al;\fc+i}\!=\!\Phi_{\al;\fc+i}\!\circ\!\pi_{\cN_XY}\!:\cN_XY\lra\R
~\forall\,i\!\in\![m],$$
and the restrictions
$$\big(\Phi_{\al;1},\ldots,\Phi_{\al;\fc}\big)\!:\cN_XY|_y\lra\bF^{\fc},
\quad y\!\in\!U_{Y;\al},$$
are $\bF$-linear isomorphisms.
Thus, for all $\al,\al'\!\in\cI$ there exists a smooth map 
\begin{gather}\label{halalprequiv_e}
h_{\al\al'}\!:U_{Y;\al}\!\cap\!U_{Y;\al'}\lra\Aut_{\bF}(\bF^{\fc})\qquad\hbox{s.t.}\\
\notag
\Phi_{\al}\!\circ\!\Phi_{\al'}^{-1}\big(r,\{\!(\Phi_{\al';\fc+i})_{i\in[m]}\}(x)\!\big)
=\big(\!\big\{h_{\al\al'}(x)\!\big\}(r),\{\!(\Phi_{\al;\fc+i})_{i\in[m]}\}(x)\!\big)
~\forall\,(r,x)\!\in\!\bF^c\!\times\!\big(U_{Y;\al}\!\cap\!U_{Y;\al'}\big).
\end{gather}
For each $\al\!\in\!\cI$, let
\BE{equivchartdfn_e}U_{\al}\!\equiv\!\Psi_Y\big(W_Y|_{U_{Y;\al}}\big), \quad
\vph_{\al}\!\equiv\!\big(\vph_{\al;1},\ldots,\vph_{\al;\fc+m}\big)
\!\equiv\!\Phi_{\al}\!\circ\!\Psi_Y^{-1}\big|_{U_{\al}}\!:U_{\al}\lra\bF^{\fc}\!\times\!\R^m\,.\EE
This is a coordinate chart for~$Y$ in~$X$; we call it a \sf{chart cut out from~$(W_Y,\Psi_Y)$}.
The collection $\{\vph_{\al}\!:U_{\al}\!\lra\!\bF^{\fc}\!\times\!\R^m\}_{\al\in\cI}$
of these charts covers~$U_Y$.
If $U_Y\!=\!Y$, then $\{\vph_{\al}\}_{\al\in\cI}$ is an $\bF$-atlas for~$Y$ in~$X$.
The associated maps~$h_{\al\al'}$ in~\eref{halalprdfn_e} in this case do not depend on $r\!\in\!\bF^{\fc}$.
It is immediate that $\bF$-equivalent tubular neighborhood identifications for $Y\!\subset\!X$ 
produce $\bF$-atlases for $Y\!\subset\!X$ that are $\bF$-equivalent.\\

\noindent
Suppose now that $U_Y\!=\!Y$ and \hbox{$\pi\!:\wt{X}\!\lra\!X$}
is the corresponding blowup of~$X$ along~$Y$ as in Section~\ref{glRCbl_subs}.
The composition of the projection in~\eref{gacNbFdfn_e} with $\cN\!=\!\cN_XY$ 
and the bundle projection \hbox{$\bF\P(\cN_XY)\!\lra\!Y$} induces a projection
\hbox{$\ga_{\cN_XY}^{\bF}\!\lra\!Y$}.
For $\al\!\in\!\cI$, let
$$\wt{W}_Y^{\bF}\big|_{U_{Y;\al}}\subset \wt{W}_Y^{\bF}\subset\ga_{\cN_XY}^{\bF}$$
be the preimage of $U_{Y;\al}\!\subset\!Y$ under the restriction of this composition 
to~$\wt{W}_Y^{\bF}$;
this is also the preimage of $U_{\al}\!\subset\!X$ under the blowdown map 
\hbox{$\pi\!:\wt{X}\!\lra\!X$}.
For each $i\!\in\![\fc]$, the subspace 
$$\wt{U}_{\al;i}\equiv\big\{\!\big([r],v\big)\!\in\!\wt{W}_Y^{\bF}|_{U_{Y;\al}}\!:
\Phi_{\al;i}(r)\!\neq\!0\big\}\subset \wt{W}_Y^{\bF}\subset \wt{X}$$
is open.
The~map
\begin{gather*}
\wt\phi_{\al;i}\!\equiv\!\big(\wt\phi_{\al;i;1},\ldots,\wt\phi_{\al;i;\fc+m}\big)\!:
\wt{U}_{\al;i}\lra \bF^{\fc}\!\times\!\R^m,\\
\wt\phi_{\al;i;j}\big([r],v\big)
=\begin{cases}\Phi_{\al;j}(r)/\Phi_{\al;i}(r),&\hbox{if}~j\!\in\![\fc]\!-\!\{i\};\\
\Phi_{\al;j}(v),&\hbox{if}~j\!\in\!\{i\}\!\cup\!\big([\fc\!+\!m]\!-\![\fc]);
\end{cases}
\end{gather*}
is a coordinate chart on~$\wt{X}$.
The collection of these charts satisfies the conditions of Lemma~\ref{StanBl_lmm}.
Thus, the $\bF$-blowup~$\wt{X}$ obtained from the tubular neighborhood identification~$(W_Y,\Psi_Y)$
for $Y\!\subset\!X$ as in Section~\ref{glRCbl_subs} 
is isomorphic to the $\bF$-blowup~$\BLF_YX$ obtained from the above $\bF$-atlas
$\{\vph_{\al}\}_{\al\in\cI}$ cut up from~$(W_Y,\Psi_Y)$.\\

\noindent
By Proposition~\ref{Loc2GlRC_prp} below, an $\bF$-atlas $\{\vph_{\al}\}_{\al\in\cI}$ of charts 
for~$Y$ in~$X$ can be assembled into a tubular neighborhood identification~$(W_Y,\Psi_Y)$ for $Y\!\subset\!X$
so that any $\bF$-atlas cut out from~$(W_Y,\Psi_Y)$ is $\bF$-equivalent
to~$\{\vph_{\al}\}_{\al\in\cI}$ and thus the blowups obtained via the local construction
of Section~\ref{locRCbl_subs} from the two $\bF$-atlases and 
the blowup obtained via the global construction of Section~\ref{glRCbl_subs} are isomorphic.
Furthermore, tubular neighborhood identifications assembled from  $\bF$-equivalent $\bF$-atlases
are $\bF$-equivalent.
This establishes a correspondence between the global and local blowup constructions
of Sections~\ref{glRCbl_subs} and~\ref{locRCbl_subs}, respectively.

\begin{prp}\label{Loc2GlRC_prp}
Suppose $X$ is a smooth manifold, $Y\!\subset\!X$ is a closed submanifold, and
$\{\vph_{\al}\}_{\al\in\cI}$ is an $\bF$-atlas of charts for~$Y$ in~$X$.
There exists a tubular neighborhood identification~$(W_Y,\Psi_Y)$ for~$Y\!\subset\!X$
such that any $\bF$-atlas of charts cut out from~$(W_Y,\Psi_Y)$ is $\bF$-equivalent
to~$\{\vph_{\al}\}_{\al\in\cI}$.
\end{prp}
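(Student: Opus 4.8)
The plan is to construct the inverse $\Theta\equiv\Psi_Y^{-1}$ of the desired tubular neighborhood identification directly from the atlas, as a fiberwise $\bF$-linear convex combination of the normal coordinates of the charts~$\vph_\al$ over a common base point. Throughout, $\cN_XY$ carries the complex structure (in the $\bF\!=\!\C$ case) determined by $\{\vph_\al\}_{\al\in\cI}$ via~\eref{NXYisom_e}, the fiber isomorphisms $\Phi_\al|_y\colon\bF^\fc\!\to\!\cN_XY|_y$ of~\eref{NXYisom_e} are $\bF$-linear for this structure, and any chart cut out from the resulting $(W_Y,\Psi_Y)$ is understood to use an $\bF$-linear trivialization for the same structure. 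First I would invoke the Tubular Neighborhood Theorem to fix an auxiliary smooth retraction $\vp\colon U\!\to\!Y$ with $\vp|_Y\!=\!\id_Y$ on a neighborhood $U\!\subset\!X$ of~$Y$, and a smooth partition of unity $\{\rho_\al\}_{\al\in\cI}$ on~$Y$ subordinate to~$\{U_{Y;\al}\}_{\al\in\cI}$. After shrinking~$U$, I may assume that $x\!\in\!U$ and $\rho_\al(\vp(x))\!\neq\!0$ force $x\!\in\!U_\al$, so the normal coordinates $r_\al(x)\!\equiv\!(\vph_{\al;1}(x),\ldots,\vph_{\al;\fc}(x))\!\in\!\bF^\fc$ are defined wherever they are used below.

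Next I would set, for $x\!\in\!U$,
\BE{Thetadfn_e}
\Theta(x)=\sum_{\al\in\cI}\rho_\al\big(\vp(x)\big)\,\Phi_\al\big|_{\vp(x)}\big(r_\al(x)\big)
\ \in\ \cN_XY\big|_{\vp(x)}\,.
\EE
Since $r_\al$ vanishes on~$Y$, the map $\Theta$ carries~$Y$ to the zero section. For $w\!\in\!T_yX$ with $y\!\in\!Y$, the relation $\nd r_\al|_y\!=\!(\Phi_\al|_y)^{-1}\!\circ\!\pi_Y^\perp$ (the chart~$\vph_\al$ reads off the normal component through~$\Phi_\al^{-1}$) together with $\sum_\al\rho_\al(y)\!=\!1$ gives that the vertical part of $\nd\Theta|_y(w)$ is $\sum_\al\rho_\al(y)\Phi_\al|_y(\nd r_\al|_y(w))\!=\!\pi_Y^\perp(w)$, while its horizontal part is $\nd\vp|_y(w)$. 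Thus $\nd\Theta|_y\!=\!(\nd\vp|_y,\pi_Y^\perp)\colon T_yX\!\to\!T_yY\!\oplus\!\cN_XY|_y$, which is an isomorphism because $\ker\nd\vp|_y$ is a complement of~$T_yY$. By the Inverse Function Theorem and a further shrinking of~$U$, the map $\Theta$ is a diffeomorphism onto an open neighborhood $W_Y\!\subset\!\cN_XY$ of the zero section; I then set $\Psi_Y\!\equiv\!\Theta^{-1}\colon W_Y\!\to\!U$. The formula for $\nd\Theta|_y$ gives $\pi_Y^\perp\!\circ\!\nd\Psi_Y|_{\cN_XY|_y}\!=\!\id$, so $(W_Y,\Psi_Y)$ is a tubular neighborhood identification for $Y\!\subset\!X$.

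It then remains to check that a chart $\vph'\!\equiv\!\Phi\!\circ\!\Psi_Y^{-1}$ cut out from $(W_Y,\Psi_Y)$ with an $\bF$-linear trivialization $\Phi$ of $\cN_XY$ over some open $V\!\subset\!Y$ is $\bF$-compatible with an arbitrary~$\vph_{\al_0}$, $\al_0\!\in\!\cI$. Writing $g_\al(y)\!\equiv\!\Phi|_y\!\circ\!\Phi_\al|_y\!\in\!\Aut_\bF(\bF^\fc)$ and using the $\bF$-atlas relation $r_\al\!=\!h_{\al\al_0}(\vph_{\al_0})\,r_{\al_0}$ of~\eref{halalprdfn_e} on the overlap, the normal components of $\vph'\!=\!\Phi\!\circ\!\Theta$ satisfy
\BE{Fdivision_e}
\big(\vph'_1(x),\ldots,\vph'_{\fc}(x)\big)
=\sum_{\al\in\cI}\rho_\al\big(\vp(x)\big)\,g_\al\big(\vp(x)\big)\,r_\al(x)
=H(x)\,r_{\al_0}(x)\,,
\EE
where $H\!\equiv\!\sum_\al(\rho_\al\!\circ\!\vp)\,(g_\al\!\circ\!\vp)\,h_{\al\al_0}(\vph_{\al_0})$ is a smooth $\End_\bF(\bF^\fc)$-valued function, being a real-linear combination of products of $\bF$-linear maps. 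In the chart~$\vph_{\al_0}$ the right-hand side of~\eref{Fdivision_e} becomes $H(\vph_{\al_0}^{-1}(r,s))\,r$, which is exactly the form~\eref{halalprdfn_e} demanded of an $\bF$-atlas, with $\bF$-valued multiplier $h\!\equiv\!H\!\circ\!\vph_{\al_0}^{-1}$. Hence $\{\vph_\al\}_{\al\in\cI}\!\cup\!\{\vph'\}$ is an $\bF$-atlas, so the cut-out atlas is $\bF$-equivalent to~$\{\vph_\al\}_{\al\in\cI}$.

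The main obstacle is precisely the one resolved by~\eref{Thetadfn_e}: the local tubular neighborhoods supplied by the individual charts slice~$X$ along~$Y$ differently, so their inverses land in different fibers of~$\cN_XY$ and cannot be averaged directly. Routing every summand through the single fiber $\cN_XY|_{\vp(x)}$ via the auxiliary projection~$\vp$ both makes the convex combination well-defined and, because each summand is $\bF$-linear in $r_\al(x)$ and the weights~$\rho_\al$ are real, forces the $\bF$-divisibility displayed in~\eref{Fdivision_e}. This last point is what the $\bF\!=\!\C$ case genuinely needs: a tubular neighborhood identification that is merely $\bF$-linear to first order along~$Y$ need not cut out an atlas $\bF$-equivalent to~$\{\vph_\al\}_{\al\in\cI}$, since a smooth $\bF^\fc$-valued function vanishing on~$Y$ with $\bF$-linear normal derivative need not be smoothly $\bF$-divisible by~$r_{\al_0}$.
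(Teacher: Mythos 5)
Your construction of $(W_Y,\Psi_Y)$ is sound, and your route is genuinely different from the paper's: the paper builds a tubular neighborhood identification from each chart separately and then merges them pairwise (via an exponential-map interpolation lemma and a reduction to a finite cover), whereas you average once, fiberwise in $\cN_XY$, with a partition of unity. The gap is in the final compatibility check. The definition of an $\bF$-atlas demands the multiplier of~\eref{halalprdfn_e} for \emph{all ordered pairs}: besides your relation $(\vph'_1,\ldots,\vph'_\fc)=H\,r_{\al_0}$, you also need a smooth $\End_{\bF}(\bF^\fc)$-valued $\tilde{H}$ on the full overlap with $r_{\al_0}=\tilde{H}\,r'$, where $r'\!\equiv\!(\vph'_1,\ldots,\vph'_\fc)$. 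For $\bF\!=\!\R$ this is automatic (as the paper notes, multipliers exist for any pair of charts), but for $\bF\!=\!\C$ it is precisely the divisibility issue you yourself flag in your last paragraph, and it does not follow from what you proved: $\tilde{H}=H^{-1}$ works only where $H$ is invertible, and your $H$ is a partition-of-unity average of invertible matrices, which can be singular away from~$Y$. Worse, near $\partial U_{\al_0}$ the factors $h_{\al\al_0}(\vph_{\al_0}(x))$ are uncontrolled as $x$ approaches the boundary of the chart, so the singular locus of $H$ can accumulate on~$Y$ and no shrinking of $W_Y$ eliminates it. This is exactly the point where the paper's local identifications $(W_\al,\Psi_\al)$ behave better: a chart cut out from $(W_\al,\Psi_\al)$ is related to $\vph_\al$ by an everywhere-invertible, $\Aut_\bF$-valued multiplier, so both directions come for free there.

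The gap is reparable within your framework, but it needs an actual argument, along the following lines. First, along $Y$ one has $h_{\al\al_0}(0,s)=(\Phi_\al|_y)^{-1}\!\circ\!\Phi_{\al_0}|_y$ (this is the derivative statement the paper records after~\eref{halalprdfn_e}), whence $H|_{Y\cap U'\cap U_{\al_0}}=\Phi|_y\!\circ\!\Phi_{\al_0}|_y$ is invertible, so $\tilde{H}=H^{-1}$ solves the problem on a neighborhood of $Y\!\cap\!U'\!\cap\!U_{\al_0}$. Second, off $Y$ the coordinates $r'$ never vanish, since $\Theta$ is a diffeomorphism carrying exactly $Y$ to the zero section; so near any point of $(U'\!\cap\!U_{\al_0})\!-\!Y$ one can solve by hand, e.g.\ $\tilde{H}(x)w=\la(w)\,r_{\al_0}(x)/\la(r'(x))$ for a fixed $\C$-linear functional $\la$ nonvanishing on the given value of~$r'$. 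Third, since for each $x$ the solutions of the linear equation $\tilde{H}(x)r'(x)=r_{\al_0}(x)$ form a convex subset of $\End_{\C}(\C^\fc)$, these local solutions patch to a global one by a partition of unity on $U'\!\cap\!U_{\al_0}$. Together with the (immediate) mutual compatibility of any two cut-out charts, this closes the argument; with that supplement your proof is correct and is arguably shorter than the paper's merging scheme.
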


\begin{proof} We will call a chart $\vph\!:U\!\lra\!\bF^{\fc}\!\times\!\R^m$ for~$Y$ in~$X$
\sf{$\bF$-compatible} with the $\bF$-atlas $\{\vph_{\al}\}_{\al\in\cI}$
if the union of this atlas with $\{\vph\}$ is still an $\bF$-atlas for~$Y$ in~$X$.
We will call a tubular neighborhood identification~$(W_Y,\Psi_Y)$ for $Y\!\subset\!X$
over an open subset $U_Y\!\subset\!Y$ \sf{$\bF$-compatible} with $\{\vph_{\al}\}_{\al\in\cI}$
if any coordinate chart for $Y\!\subset\!X$ cut out from~$(W_Y,\Psi_Y)$ is 
$\bF$-compatible with $\{\vph_{\al}\}_{\al\in\cI}$.\\

\noindent
For each $\al\!\in\!\cI$, let
$$\vph_{\al}\!=\!(\vph_{\al;1},\ldots,\vph_{\al;\fc+m})\!:
U_{\al}\!\lra\!\bF^{\fc}\!\times\!\R^m$$
as before, $U_{Y;\al}\!=\!U_{\al}\!\cap\!Y$, and $\Phi_{\al}$ be as in~\eref{NXYisom_e}.
We define a tubular neighborhood identification for $Y\!\subset\!X$ over~$U_{Y;\al}$ by
\begin{gather*}
\Psi_{\al}\!:W_{\al}\!\equiv\!\Phi_{\al}\big(\!\big\{\!(r,y)\!\in\!\bF^c\!\times\!U_{Y;\al}\!:
\big(r,\{\!(\vph_{\al;\fc+1},\ldots,\vph_{\al;\fc+m})\!\}(y)\!\big)\!\in\!\vph_{\al}(U_{\al})\!\big\}
\!\big)
\lra X,\\
\Psi_{\al}\big(\Phi_{\al}(r,y)\!\big)=
\vph_{\al}^{-1}\big(r,\{\!(\vph_{\al;\fc+1},\ldots,\vph_{\al;\fc+m})\!\}(y)\!\big).
\end{gather*}
This identification is $\bF$-compatible with the $\bF$-atlas $\{\vph_{\al'}\}_{\al'\in\cI}$
because the domain of any chart cut out from~$(W_{\al},\Psi_{\al})$ is contained in~$U_{\al}$
and any such chart is compatible with~$\vph_{\al}$ in the sense of~\eref{halalprdfn_e}.
The collection $\{Y\!\cap\!W_{\al}\}_{\al\in\cI}$ is an open cover of~$Y$.\\

\noindent
Let $\{U_{Y;i}\}_{i\in[N]}$ be an open cover of~$Y$ and
$\{\!(W_i,\Psi_i)\!\}_{i\in[N]}$ be a collection of tubular neighborhood identifications
for $Y\!\subset\!X$ over $U_{Y;i}\!\subset\!Y$ so that 
each $(W_i,\Psi_i)$ is $\bF$-compatible with the $\bF$-atlas~$\{\vph_{\al'}\}_{\al'\in\cI}$.
This in particular implies that for any $i,j\!\in\![N]$ 
the tubular neighborhood identifications $(W_i,\Psi_i)$ and~$(W_j,\Psi_j)$
are $\bF$-equivalent.
Such collections $\{U_{Y;i}\}_{i\in[N]}$ and $\{\!(W_i,\Psi_i)\!\}_{i\in[N]}$ with $N\!=\!m\!+\!1$
can be obtained by taking each~$U_{Y;i}$ to be a disjoint union of open subsets~$U_{Y;\be}$ of~$Y$
each of which is contained in some~$U_{Y;\al(\be)}$ and then restricting~$\Psi_{\al(\be)}$
to a neighborhood of~$U_{Y;\be}$ in~$W_{\al(\be)}$;
see the proof of \cite[Lemma~2.4]{pseudo} for a similar construction.
The claim now follows from Lemma~\ref{Loc2GlRCmerge_lmm} below applied $N\!-\!1$ times.
\end{proof}

\begin{lmm}\label{Loc2GlRCmerge_lmm}
Suppose $X$ is a smooth manifold, $Y\!\subset\!X$ is a closed submanifold,
$\cN_XY$ has a complex structure if $\bF\!=\!\C$,
and $(W_1,\Psi_1)$ and $(W_2,\Psi_2)$ are tubular neighborhood identifications for $Y\!\subset\!X$
over open subsets $U_1\!\subset\!Y$ and $U_2\!\subset\!Y$, respectively.
If $(W_1,\Psi_1)$ and $(W_2,\Psi_2)$ are $\bF$-equivalent,
there exists a tubular neighborhood identification~$(W,\Psi)$ for~$Y$ over 
\hbox{$U_1\!\cup\!U_2$} which is $\bF$-equivalent to~$(W_1,\Psi_1)$ and~$(W_2,\Psi_2)$.
\end{lmm}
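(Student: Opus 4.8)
The plan is to interpolate between the two identifications by means of a partition of unity, pushing all of the genuine transition into the overlap $U_1\!\cap\!U_2$, where the $\bF$-linear datum $h$ of \eref{TNIequiv_e} is available. Since $U_1\!\setminus\!U_2$ and $U_2\!\setminus\!U_1$ are disjoint closed subsets of the smooth manifold $U_1\!\cup\!U_2$, a smooth Urysohn function provides $\beta\!:\!U_1\!\cup\!U_2\!\lra\![0,1]$ with $\beta\!\equiv\!0$ on an open neighborhood $N_1\!\subset\!U_1$ of $U_1\!\setminus\!U_2$ and $\beta\!\equiv\!1$ on an open neighborhood $N_2\!\subset\!U_2$ of $U_2\!\setminus\!U_1$; thus $\{0\!<\!\beta\!<\!1\}\!\subset\!U_1\!\cap\!U_2$. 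I would build $(W,\Psi)$ over $U_1\!\cup\!U_2$ from two pieces: $\Psi\!=\!\Psi_1\!\circ\!G$ over $U_1$, where $G$ is a fiber diffeomorphism of $\cN_XY$ interpolating from $\id$ (where $\beta\!=\!0$) to $F\!\equiv\!\Psi_1^{-1}\!\circ\!\Psi_2$ (where $\beta\!=\!1$), and $\Psi\!=\!\Psi_2$ over $N_2$. On the overlap $N_2\!\cap\!U_1$ one has $\beta\!\equiv\!1$, hence $G\!=\!F$ and $\Psi_1\!\circ\!G\!=\!\Psi_2$, so the two pieces agree and define a single smooth map on a neighborhood of $U_1\!\cup\!U_2$ in~$\cN_XY$.

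The construction of $G$ is the crux, and I expect the main obstacle to be that, away from~$Y$, the map $F$ need not preserve the fibers of~$\cN_XY$: by the tubular neighborhood conditions $F$ fixes the zero section and has fiberwise differential $\id$ along it, but for $v\!\neq\!0$ its base point $\tilde\pi(v)\!\equiv\!\pi_{\cN_XY}(F(v))$ may differ from $\pi_{\cN_XY}(v)$. Correspondingly, $h(v)$ in \eref{TNIequiv_e} is an $\bF$-linear map between the distinct fibers $\cN_XY|_{\pi_{\cN_XY}(v)}$ and $\cN_XY|_{\tilde\pi(v)}$, so the naive convex combination $(1\!-\!\beta)\id\!+\!\beta\,h(v)$ is not even defined. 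I would resolve this by fixing a linear connection on $\cN_XY$ together with an auxiliary metric on~$Y$ and, for each~$v$, joining $\pi_{\cN_XY}(v)$ to $\tilde\pi(v)$ by a short path $\gamma_v$ determined by~$F$; with $P^t_v$ the parallel transport along~$\gamma_v$, the $\bF$-linear maps $\hat h^t_v\!=\!P^t_v\!\circ\!\big((1\!-\!t)\id\!+\!t\,(P^1_v)^{-1}h(v)\big)$ equal $\id$ at $t\!=\!0$ and $h(v)$ at $t\!=\!1$. Setting $G(v)\!=\!\{\hat h^{\,\beta(\pi_{\cN_XY}(v))}_v\}(v)$ then gives a smooth map that is exactly the identity where $\beta\!=\!0$ (so it extends by the identity across $U_1\!\setminus\!U_2$, where $F$ and $h$ are undefined) and equals $F$ where $\beta\!=\!1$.

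To see that $\Psi$ is a tubular neighborhood identification over $U_1\!\cup\!U_2$, I note that each piece restricts to the identity on the zero section, and that along~$Y$ the fiberwise differential of~$G$ is $\id$ because $\hat h^t_{0_y}\!=\!\id$; together with the analogous property of $\Psi_1$ and~$\Psi_2$ this yields the required normal-derivative condition of \eref{TNIprop_e} and shows $\nd\Psi$ is an isomorphism along $U_1\!\cup\!U_2$. By the shrinking part of the Tubular Neighborhood Theorem, after replacing~$W$ by a smaller neighborhood of the zero section, $\Psi$ is a diffeomorphism onto an open neighborhood of $U_1\!\cup\!U_2$ in~$X$; this shrinking is harmless for the equivalence class.

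Finally I would verify the two equivalences. Over~$U_1$ we have $\Psi_1^{-1}\!\circ\!\Psi\!=\!G\!=\!\{\hat h^{\,\beta}_\bullet\}(\cdot)$ with $\hat h^{\,\beta}$ an $\bF$-linear section of the relevant $\Hom$-bundle, which is exactly the datum of \eref{TNIequiv_e} exhibiting $\bF$-equivalence of $(W,\Psi)$ with $(W_1,\Psi_1)$. For $(W_2,\Psi_2)$ I would use that maps of the form $v\!\mapsto\!\{a(v)\}(v)$ with $a$ a pointwise invertible $\bF$-linear section are closed under composition and inversion: if $A(v)\!=\!\{a(v)\}(v)$ and $B(w)\!=\!\{b(w)\}(w)$ then $B(A(v))\!=\!\{b(A(v))\!\circ\!a(v)\}(v)$, and $A^{-1}(w)\!=\!\{a(A^{-1}(w))^{-1}\}(w)$. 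Since $\Psi_2^{-1}\!\circ\!\Psi\!=\!F^{-1}\!\circ\!G$ and both $F$ (by hypothesis, after shrinking so that $h$ is invertible near~$Y$) and~$G$ are of this form, so is $\Psi_2^{-1}\!\circ\!\Psi$, giving the $\bF$-equivalence with $(W_2,\Psi_2)$; the condition \eref{TNIprop_e} holds automatically because every $\bF$-linear section in sight restricts to~$\id$ along~$Y$. The one genuinely delicate point is the fiber non-preservation of~$F$ handled in the second paragraph; everything else is routine once the interpolation is in place.
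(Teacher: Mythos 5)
Your proposal is correct and is essentially the paper's own argument: the paper interpolates with a bump function $\eta$ on $Y$ exactly as you do with $\beta$, and it handles the fiber non-preservation of $\Psi_1^{-1}\!\circ\!\Psi_2$ by fixing an abstract pair $(\exp_Y,\wt\exp_Y)$ --- an exponential-type map $TY\!\to\!Y$ together with a bundle identification $\pi_{TY}^*\cN_XY\!\to\!\exp_Y^*\cN_XY$ --- of which your auxiliary metric's geodesics and your connection's parallel transport are a concrete instance. The paper's interpolation $v+\eta\big(\{h_{\th}(v)\}(v)-v\big)$ transported by $\wt\exp_Y$ at the scaled vector $\eta\,\th(v)$ is precisely your $\hat h^{\,t}_v$, and the concluding steps (Inverse Function Theorem to get a genuine tubular neighborhood identification after shrinking, direct verification of equivalence with $(W_1,\Psi_1)$, and a composition/transitivity argument for $(W_2,\Psi_2)$) also coincide.
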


\begin{proof} The proof is an adaptation of the proof of \cite[Lemma~4.4]{SympDivConf}.
Let \hbox{$\pi_{TY}\!:TY\!\lra\!Y$} be the bundle projection and 
$T^{\ver}(TY)\!\subset\!T(TY)$ be the vertical tangent subbundle;
the fiber~$T_v^{\ver}(TY)$ of the latter at $v\!\in\!TY$ is canonically identified with $T_{\pi_{TY}(v)}Y$.
Let \hbox{$\exp_Y\!:TY\!\lra\!Y$} be a smooth map so that 
$\exp_Y|_{T_yY}$ is an open embedding for every $y\!\in\!Y$, 
\BE{expcond_e}\exp_Y\!\big|_Y=\id_Y, \qquad\hbox{and}\qquad
\nd_y\exp_Y\!=\!\id\!: T_y^{\ver}(TY)\lra T_yY~~\forall\,y\!\in\!Y. \EE
Choose an isomorphism
\BE{wtexpcond_e0}\wt\exp_Y\!:\pi_{TY}^*\cN_XY\lra \exp_Y^*\cN_XY\subset TY\!\times\!\cN_XY\EE
of $\bF$-vector bundles over~$TY$ restricting to the identity over~$Y\!\subset\!TY$, i.e.
\BE{wtexpcond_e}\wt\exp_Y(y,v)=(y,v)\qquad\forall~
(y,v)\in\big(\pi_{TY}^*\cN_XY\big)\!\big|_Y\subset  Y\!\times\!\cN_XY.\EE
Denote by $\pi_2\!:\exp_Y^*\cN_XY\!\lra\!\cN_XY$ 
the projection to the second component.\\

\noindent
Let $W_{\cap}\!\subset\!\cN_XY|_{U_1\cap U_2}$ and~$h$ be as in~\eref{TNIequiv_e} and
$$W_{\cap}'=\big\{v\!\in\!W_{\cap}\!:
\pi_{\cN_XY}\big(\Psi_1^{-1}(\Psi_2(v)\!)\!\big)\!\in\!\exp_Y(T_{\pi_{\cN_XY}(v)}Y)\big\};$$
the last set is an open neighborhood of $U_1\!\cap\!U_2$ in~$\cN_XY$.
Define 
\begin{alignat*}{2}
\th&\in\Ga\big(W_{\cap}',\pi_{\cN_XY}^*TY\big) &\qquad\hbox{by}\quad
&\exp_Y\!\big(\th(v)\!\big)=\pi_{\cN_XY}\big(\Psi_1^{-1}(\Psi_2(v)\!)\!\big),\\
h_{\th}&\in\Ga\big(W_{\cap}',\pi_{\cN_XY}^*\End_{\bF}(\cN_XY)\!\big)
&\qquad\hbox{by}\quad
&\pi_2\big(\wt\exp_Y\!\big(\th(v),\big\{\!h_{\th}(v)\!\big\}(w)\!\big)\!\big)
=\big\{\!h(v)\!\big\}(w).
\end{alignat*}
By~\eref{TNIprop_e},
\BE{TNIprop_e2}\begin{split}
&\th\big|_{U_1\cap U_2}\!=\!\id_{U_1\cap U_2}\!:
U_1\!\cap\!U_2\lra U_1\!\cap\!U_2\subset \pi_{\cN_XY}^*TY\big|_Y \qquad\hbox{and}\\
&h_{\th}\big|_{U_1\cap U_2}\!=\!\id_{\cN_XY|_{U_1\cap U_2}}\!:
\cN_XY|_{U_1\cap U_2}\lra\cN_XY|_{U_1\cap U_2}\,.
\end{split}\EE

\vspace{.18in} 

\noindent
Replacing the manifold $X$ with an open subset~$X'$ which intersects~$Y$ at $U_1\!\cap\!U_2$ and 
each tubular neighborhood identification $(W_i,\Psi_i)$ by $(\Psi_i^{-1}(X'),\Psi_i|_{\Psi_i^{-1}(X')})$, 
we can assume that \hbox{$U_1\!\cup\!U_2\!=\!Y$}.
Choose open subsets $U_1'\!\subset\!U_1$ and $U_2'\!\subset\!U_2$ so~that
$$\ov{U_1'}\subset U_1', \qquad \ov{U_2'}\subset U_2', 
\quad\hbox{and}\quad U_1'\!\cup\!U_2'=Y$$
and a smooth bump function $\eta\!:Y\!\lra\![0,1]$ with support contained in~$U_2'$
so that \hbox{$\eta|_{U_2-\ov{U_1'}}\!=\!1$}.
The subset 
$$W\equiv W_1\big|_{U_1-\ov{U_2'}}\!\cup\!W_{\cap}'\!\cup\!W_2\big|_{U_2-\ov{U_1'}}
\subset \cN_XY$$
is then an open neighborhood of~$Y$ and
\begin{gather*}
\Psi\!:W\lra \cN_XY, \\
\Psi(v)=\begin{cases}
\Psi_1(v),&\hbox{if}~v\!\in\!W_1|_{U_1-\ov{U_2'}};\\
\Psi_1
\big(\pi_2\big(\wt\exp_Y\big(\eta(\pi_{\cN_XY}(v)\!)\!\big)\th(v),
v\!+\!\eta(\pi_{\cN_XY}(v)\!)\big(\{h_{\th}(v)\!\}(v)\!-\!v\big)\!\big)\!\big),
&\hbox{if}~v\!\in\!W_{\cap}'\,;\\
\Psi_2(v),&\hbox{if}~v\!\in\!W_2|_{U_2-\ov{U_1'}};
\end{cases}
\end{gather*}
is a well-defined smooth map.
By~\eref{TNIprop_e2}, $\Psi|_Y\!=\!\id_Y$ and the homomorphism
$$T(\cN_XY)\big|_Y \xlra{\nd\Psi} TX|_Y
\xlra{\pi_Y^{\perp}} \frac{TX|_Y}{TY}\!\equiv\!\cN_XY$$
restricts to the identity homomorphism on \hbox{$\cN_XY\!\subset\!T(\cN_XY)|_Y$}.
By the Inverse Function Theorem, there thus exists a neighborhood $W'\!\subset\!W$
of~$Y$ such that $(W',\Psi|_{W'})$ is a tubular neighborhood identification for $Y\!\subset\!X$.\\

\noindent
Since $U_1\!\subset\!(U_1-\ov{U_2'})\!\cup\!W_{\cap}'$ and 
$$\Psi_1^{-1}\big(\Psi(v)\!\big)=\begin{cases}
v,&\hbox{if}~v\!\in\!W'|_{U_1-\ov{U_2'}};\\
\pi_2\big(\wt\exp_Y\big(\eta(\pi_{\cN_XY}(v)\!)\!\big)\th(v),
v\!+\!\eta(\pi_{\cN_XY}(v)\!)\big(\{h_{\th}(v)\!\}(v)\!-\!v\big)\!\big),
&\hbox{if}~v\!\in\!W'\!\cap\!W_{\cap}';
\end{cases}$$
the tubular neighborhood identification $(W',\Psi|_{W'})$ is $\bF$-equivalent to~$(W_1,\Psi_1)$. 
The restrictions of~$\Psi'|_{W'}$ and~$\Psi_2$ to~$W'|_{U_2-\ov{U_1'}}$ are the same.
Since $Y\!\cap\!W_{\cap}'\!\subset\!U_1$,
the restriction of~$\Psi'|_{W'}$ to~$W'\!\cap\!W_{\cap}'$ is $\bF$-equivalent to~$(W_1,\Psi_1)$,
and~$(W_1,\Psi_1)$ is $\bF$-equivalent to~$(W_2,\Psi_2)$,
the restriction of~$\Psi'|_{W'}$ to~$W'\!\cap\!W_{\cap}'$ is $\bF$-equivalent to~$(W_2,\Psi_2)$.
Since $U_2\!\subset\!(U_2-\ov{U_1'})\!\cup\!W_{\cap}'$, it follows that
$(W',\Psi|_{W'})$ is $\bF$-equivalent to~$(W_2,\Psi_2)$ as~well.
\end{proof}

\section{Augmented blowup}
\label{glaugbl_sec}

\noindent
An $\cN^{\fc_1}$-augmented blowup is a pair of blowups followed by a blowdown.
This makes the global description of this blowup significantly more extensive 
than in Section~\ref{glRCbl_subs} and requires a preparatory step
ensuring that the blowdown step can be carried~out in the smooth category.
The local description is even more cumbersome as it involves two new types
of coordinate charts, in addition to a suitable $\R$-atlas of charts on the original manifold,
and overlap maps between~these charts.

\subsection{Global construction}
\label{glaugbl_subs}

\noindent
We continue with the notation of the global blowup construction 
in Section~\ref{glRCbl_subs}, restricting to the case $\bF\!=\!\R$.

\subsubsection{Preparation}
Let $\cN\!\lra\!Y$ be a real vector bundle of rank~$\fc$ and
$\cN^{\fc_1}\!\subset\!\cN$ be a real subbundle of corank~$\fc_1$
for some $\fc_1\!\in\![\fc\!-\!1]$.
Suppose $\lr{\cdot,\cdot}$ is an inner-product on~$\cN^{\fc_1}$ and
\hbox{$\cN'\!\subset\!\cN$} is a subbundle complementary to~$\cN^{\fc_1}$ 
so that \hbox{$\cN\!=\!\cN'\!\oplus\!\cN^{\fc_1}$}.
The projectivization~$\R\P\cN$ then contains~$\R\P\cN'$ and~$\R\P\cN^{\fc_1}$
as disjoint submanifolds.
Let 
$$ \pi'\!:\R\P\cN'\lra Y, \quad \pi_{\fc_1}\!:\R\P\cN^{\fc_1}\!\lra\!Y, \quad\hbox{and}\quad 
\wt\pi',\wt\pi_{\fc_1}\!:\R\P\cN'\!\times_Y\!\R\P\cN^{\fc_1}\lra\R\P\cN',\R\P\cN^{\fc_1}$$
be the projections.\\

\noindent 
There is a vector bundle isomorphism
\BE{RRcNbl_e1}\cN_{\R\P\cN}(\R\P\cN^{\fc_1})\approx 
(\ga_{\cN^{\fc_1}}^{\R})^{\!*}\!\!\otimes_{\R}\!\pi_{\fc_1}^*\cN'
\lra\R\P\cN^{\fc_1}\subset\R\P\cN.\EE
This isomorphism is associated with the tubular neighborhood identification
\BE{RRcNbl_e3}\Psi_{\R\P\cN^{\fc_1}}\!: 
(\ga_{\cN^{\fc_1}}^{\R})^{\!*}\!\!\otimes_{\R}\!\pi_{\fc_1}^*\cN'
\lra \R\P\cN\!-\!\R\P\cN', \quad
\Psi_{\R\P\cN^{\fc_1}}\big(u\!\otimes\!([w],v)\!\big)=\big[u(w)v\!+\!w\big].\EE
We blow up~$\R\P\cN$ along~$\R\P\cN^{\fc_1}$ using this tubular neighborhood identification
as in Section~\ref{glRCbl_subs}.
The isomorphism~\eref{RRcNbl_e1} yields identifications
\begin{equation*}
\xymatrix{\cN_{\BLR_{\R\P\cN^{\fc_1}}\!(\R\P\cN)}\!
\big(\bE_{\R\P\cN^{\fc_1}}^{\R}\!(\R\P\cN)\!\big)
\!\equiv\!\ga_{\cN_{\R\P\cN}(\R\P\cN^{\fc_1})} 
\ar[r]^<<<<<{\approx}\ar[d]&
\wt\pi_{\fc_1}^*(\ga_{\cN^{\fc_1}}^{\R})^*\!\otimes\!\wt\pi'^*\ga_{\cN'}^{\R}
\!=\!\wt\pi'^*\ga_{\cN'}^{\R}\!\otimes\!(\ga_{\pi'^*\cN^{\fc_1}}^{\R})^*
\ar[d]\\
\bE_{\R\P\cN^{\fc_1}}^{\R}\!(\R\P\cN)
\!\equiv\! \R\P\big(\cN_{\R\P\cN}(\R\P\cN^{\fc_1})\!\big)\ar[r]^<<<<<<<<<{\approx}&
\R\P\cN'\!\times_Y\!\R\P\cN^{\fc_1}\!=\!\R\P\big(\ga_{\cN'}^{\R}\!\otimes\!\pi'^*\cN^{\fc_1}\big).}
\end{equation*}
These identifications commute with the projections to~$\R\P\cN^{\fc_1}$.
Since  \hbox{$(\ga_{\pi'^*\cN^{\fc_1}}^{\R})^{\!*}\!\approx\!\ga_{\pi'^*\cN^{\fc_1}}^{\R}$} 
via~$\lr{\cdot,\cdot}$,
these identifications induce an identification
\BE{BRREdiag_e}\begin{split}
\xymatrix{\cN_{\BLR_{\R\P\cN^{\fc_1}}\!(\R\P\cN)}\!\big(\bE_{\R\P\cN^{\fc_1}}^{\R}\!(\R\P\cN)\!\big)
\!-\!\bE_{\R\P\cN^{\fc_1}}^{\R}\!(\R\P\cN)  \ar[d]\ar[rr]^>>>>>>>>>>>{\approx}&&
 \ga_{\cN'}^{\R}\!\otimes\!\pi'^*\cN^{\fc_1}\!-\!\R\P\cN'\ar[d]\\
\bE_{\R\P\cN^{\fc_1}}^{\R}\!(\R\P\cN)\ar[rr]^{\wt\pi'} && \R\P\cN'}
\end{split}\EE
which commutes with the projections to~$Y$.\\

\noindent
By the previous paragraph,
the quotient~$\BL_{\cN^{\fc_1}}\!(\R\P\cN)$ of~$\BLR_{\R\P\cN^{\fc_1}}\!(\R\P\cN)$ obtained by
collapsing each fiber of the map~$\wt\pi'$ in~\eref{BRREdiag_e} to a point is a smooth manifold.
The images~of 
$$\R\P\cN'\subset\R\P\cN\!-\!\R\P\cN^{\fc_1}= 
\BLR_{\R\P\cN^{\fc_1}}\!(\R\P\cN)\!-\!\bE_{\R\P\cN^{\fc_1}}^{\R}\!(\R\P\cN)$$
and $\bE_{\R\P\cN^{\fc_1}}^{\R}\!(\R\P\cN)$ under this collapsing procedure are smooth 
submanifolds of~$\BL_{\cN^{\fc_1}}\!(\R\P\cN)$ canonically identified with~$\R\P\cN'$.
We denote them by~$\R\P\cN'_0$ and~$\R\P\cN'_{\i}$, respectively.
Their normal bundles are  
$(\ga_{\cN'}^{\R})^*\!\otimes_{\R}\!\pi'^*\cN^{\fc_1}$ and
$\ga_{\cN'}^{\R}\!\otimes_{\R}\!\pi'^*\cN^{\fc_1}$, respectively.
The manifold~$\BL_{\cN^{\fc_1}}\!(\R\P\cN)$ is obtained by gluing the complements 
of the zero sections in the total spaces of these bundles:
\begin{gather*}
\BL_{\cN^{\fc_1}}\!(\R\P\cN)=
\big(\big(\!(\ga_{\cN'}^{\R})^*\!\otimes_{\R}\!\pi'^*\cN^{\fc_1})\!\sqcup\!
\big(\ga_{\cN'}^{\R}\!\otimes_{\R}\!\pi'^*\cN^{\fc_1}\big)\!\big)
\!\big/\!\!\sim,\\
(\ga_{\cN'}^{\R})^*\!\otimes_{\R}\!\pi'^*\cN^{\fc_1}\!-\!\R\P\cN'_0
\ni u\!\otimes\!\big([v],w\big) \sim 
\big([v],v/u(v)\!\big)\!\otimes\!\big([v],w/\lr{w,w}\!\big)
\in \ga_{\cN'}^{\R}\!\otimes_{\R}\!\pi'^*\cN^{\fc_1}\!-\!\R\P\cN'_{\i}.
\end{gather*}
Thus, there is a natural fiber bundle
$$S^{\fc-\fc_1}\lra \BL_{\cN^{\fc_1}}\!(\R\P\cN)\lra\R\P\cN'\approx \R\P\big(\cN_XY/\cN^{\fc_1}\big)$$
with disjoint sections~$\R\P\cN'_0$ and~$\R\P\cN'_{\i}$.
Let
$$\R\P_{\cN^{\fc_1}}\cN=\BL_{\cN^{\fc_1}}\!(\R\P\cN)
\underset{\R\P\cN'_{\i}=\R\P(\cN'\oplus\tau_Y^0)}{\cup}\R\P\big(\cN'\!\oplus\!\tau_Y^1\big).$$

\subsubsection{Construction}\label{glaugbl_subsubs}
Let $\cN^{\fc_1}$ be a real subbundle of $\cN\!\equiv\!\cN_XY$ 
of corank~$\fc_1$ for some \hbox{$\fc_1\!\in\![\fc\!-\!1]$}.
Suppose $\lr{\cdot,\cdot}$ is an inner-product on~$\cN^{\fc_1}$ and
$\cN'\!\subset\!\cN_XY$ is a complement of~$\cN^{\fc_1}$ as before.
Let 
\begin{gather*}
\pi'\!:\R\P\big(\cN'\!\oplus\!\tau_Y^1\big)\lra Y, \qquad
\pi_{\fc_1}\!:\R\P\cN^{\fc_1}\lra Y, \qquad\hbox{and}\\ 
\wt\pi',\wt\pi_{\fc_1}\!:\R\P\big(\cN'\!\oplus\!\tau_Y^1\big)\!\times_Y\!\R\P\cN^{\fc_1}
\lra \R\P\big(\cN'\!\oplus\!\tau_Y^1\big),\R\P\cN^{\fc_1}
\end{gather*}
be the projections.\\

\noindent
We first blow up~$X$ along~$Y$ using a tubular neighborhood identification~$(W_Y,\Psi_Y)$
as in Section~\ref{glRCbl_subs}.
The isomorphism~\eref{RRcNbl_e1} then extends to a vector bundle isomorphism
$$\cN_{\BLR_YX}\big(\R\P\cN^{\fc_1}\big) \approx 
\big(\ga_{\cN^{\fc_1}}^{\R}\big)^{\!*}\!\otimes_{\R}\!\pi_{\fc_1}^*\cN'
\!\oplus\!\ga_{\cN^{\fc_1}}^{\R}\lra \R\P\cN^{\fc_1}\subset\BLR_YX.$$
This identification is associated with the restriction of
the tubular neighborhood identification
\begin{gather*}
\Psi_{\fc_1}\!: \big(\ga_{\cN^{\fc_1}}^{\R}\big)^{\!*}\!\otimes_{\R}\!\pi_{\fc_1}^*\cN'
\!\oplus\!\ga_{\cN^{\fc_1}}^{\R}\lra \ga_{\cN}^{\R}, \\
\Psi_{\fc_1}\big(u\!\otimes\!([w],v),([w],w')\!\big)
=\big([w\!+\!u(w)v],u(w')v\!+\!w'\big),
\end{gather*}
to the preimage of $\wt{W}_Y^{\R}\!\subset\!\ga_{\cN}^{\R},\BLR_YX$.
The inner-product $\lr{\cdot,\cdot}$ thus induces an isomorphism
\BE{RRcNbl_e1b}\cN_{\BLR_YX}\big(\R\P\cN^{\fc_1}\big)\approx
\ga_{\cN^{\fc_1}}^{\R}\!\otimes\! \pi_{\fc_1}^*\big(\cN'\!\oplus\!\tau_Y^1\big)\EE
of vector bundles over~$\R\P\cN^{\fc_1}$.\\

\noindent
We next blow up~$\BLR_YX$ along~$\R\P\cN^{\fc_1}$ using the above tubular neighborhood 
identification as in Section~\ref{glRCbl_subs}.
The isomorphism~\eref{RRcNbl_e1b} yields identifications
\begin{small}\begin{equation*}
\xymatrix{\cN_{\BLR_{\R\P\cN^{\fc_1}}\!(\BLR_YX)}\!\big(\bE_{\R\P\cN^{\fc_1}}^{\R}\!(\BLR_YX)\!\big)
\!\equiv\!\ga_{\cN_{\BLR_YX}(\R\P\cN^{\fc_1})} \ar[r]^<<<<{\approx}\ar[d]&
\wt\pi_{\fc_1}^*\ga_{\cN^{\fc_1}}^{\R}\!\otimes\!\wt\pi'^*\ga_{\cN'\oplus\tau_Y^1}
\!=\!\wt\pi'^*\ga_{\cN'\oplus\tau_Y^1}^{\R}\!\otimes\!\ga_{\pi'^*\cN^{\fc_1}}^{\R}\ar[d]\\
\bE_{\R\P\cN^{\fc_1}}^{\R}\!(\BLR_YX)
\!\equiv\! \R\P\big(\cN_{\BLR_YX}(\R\P\cN^{\fc_1})\!\big)\ar[r]^<<<<<<<<<{\approx}&
 \R\P\big(\cN'\!\oplus\!\tau_Y^1\big)\!\times_Y\!\R\P\cN^{\fc_1}
\!=\!\R\P\big(\ga_{\cN'\oplus\tau_Y^1}^{\R}\!\otimes\!\pi'^*\cN^{\fc_1}\big).}
\end{equation*}
\end{small}The above identifications commute with the projections to~$\R\P\cN^{\fc_1}$
and induce an identification 
$$\xymatrix{\cN_{\BLR_{\R\P\cN^{\fc_1}}\!(\BLR_YX)}\!\big(\bE_{\R\P\cN^{\fc_1}}^{\R}\!(\BLR_YX)\!\big)
\!-\!\bE_{\R\P\cN^{\fc_1}}^{\R}\!(\BLR_YX)  \ar[d]\ar[rr]^>>>>>>>>>>>{\approx}&&
\ga_{\cN'\oplus\tau_Y^1}^{\R}\!\otimes\!\pi'^*\cN^{\fc_1}
\!-\!\R\P\big(\cN'\!\oplus\!\tau_Y^1\big)\ar[d]\\
\bE_{\R\P\cN^{\fc_1}}^{\R}\!(\BLR_YX)\ar[rr]^{\wt\pi'} &&\R\P\big(\cN'\!\oplus\!\tau_Y^1\big)}$$
which commutes with the projections to~$Y$.\\

\noindent
By the previous paragraph,
the quotient~$\BL_{\cN^{\fc_1}}\!X$ of~$\BLR_{\R\P\cN^{\fc_1}}\!(\BLR_YX)$ obtained by
collapsing each fiber of~$\wt\pi'$ to a point is a smooth manifold;
we call it the \sf{$\cN^{\fc_1}$-augmented blowup of~$X$ along~$Y$}.
This manifold contains 
$$\bE_{\cN^{\fc_1}}^-X\equiv\BL_{\cN^{\fc_1}}\!\big(\R\P(\cN_XY)\!\big)
\qquad\hbox{and}\qquad 
\bE_{\cN^{\fc_1}}^0X\equiv\R\P\big(\cN'\!\oplus\!\tau_Y^1\big)$$ 
as smooth submanifolds intersecting along $\R\P\cN'_{\i}\!=\!\R\P(\cN'\!\oplus\!\tau_Y^0)$.
Thus, the \sf{exceptional locus} 
$$\bE_{\cN^{\fc_1}}\!X\equiv \bE_{\cN^{\fc_1}}^-X\!\cup\!\bE_{\cN^{\fc_1}}^0X
\subset \BL_{\cN^{\fc_1}}\!X$$
is isomorphic to $\R\P_{\cN^{\fc_1}}\!(\cN_XY)$ (as a union of two smooth manifolds
joined along a common submanifold).
The composition
$$ \BLR_{\R\P\cN^{\fc_1}}\!(\BLR_YX) \stackrel\pi\lra \BLR_YX \stackrel\pi\lra X$$
descends to a \sf{blowdown map} $\pi\!:\BL_{\cN^{\fc_1}}\!X\!\lra\!X$,
which is surjective, proper, and smooth.
The latter restricts to a diffeomorphism from $\BL_{\cN^{\fc_1}}\!X\!-\!\bE_{\cN^{\fc_1}}\!X$
to~$X\!-\!Y$.\\

\noindent
We call tubular neighborhood identifications $(W_1,\Psi_1)$ and~$(W_2,\Psi_2)$ 
for $Y\!\subset\!X$ over open subsets $U_1\!\subset\!Y$ and $U_2\!\subset\!Y$, respectively,
\sf{$(\cN^{\fc_1},\lr{\cdot,\cdot}\!)$-equivalent} if there exist~$W_{\cap}$ and $h$ in~\eref{TNIequiv_e}
with $\bF\!=\!\R$ and $f\!\in\!C^{\i}(W_{\cap};\R^+)$ such~that
\BE{augatlcond_e0}\begin{aligned}
\big\{\!h(v)\!\big\}\!\big(\cN^{\fc_1}|_{\pi_{\cN_XY}(v)}\big)
&\subset\cN^{\fc_1}\big|_{\pi_{\cN_XY}(\Psi_1^{-1}(\Psi_2(v)))}
&\qquad&\forall~v\!\in\!W_{\cap},\\
\blr{\!\{h(v)\!\}(w),\{h(v)\!\}(w)\!}&=f(v)\lr{w,w}
&\qquad&\forall~v\!\in\!W_{\cap},\,w\!\in\!\cN^{\fc_1}\big|_{\pi_{\cN_XY}(v)}.
\end{aligned}\EE
This notion depends only on the conformal equivalence class of the inner-product~$\lr{\cdot,\cdot}$
on the corank~$\fc_1$-subbundle $\cN^{\fc_1}\!\subset\!\cN_XY$ and determines 
an equivalence relation on the collection of all 
tubular neighborhood identifications for $Y\!\subset\!X$ (over $U_Y\!=\!X$).
We show in Subsection~\ref{augOpenSet_subsub} that the isomorphism class of 
an $\cN^{\fc_1}$-augmented blowup \hbox{$\pi\!:\BL_{\cN^{\fc_1}}\!X\!\lra\!X$} 
is determined by a subbundle $\cN^{\fc_1}\!\subset\!\cN_XY$
of corank $\fc_1\!\in\![\fc\!-\!1]$, a conformal class of inner-products on~$\cN^{\fc_1}$,
and an $(\cN^{\fc_1},\lr{\cdot,\cdot}\!)$-equivalence class of tubular neighborhood identifications
for $Y\!\subset\!X$ with respect to any inner-product~$\lr{\cdot,\cdot}$ in the conformal class.

\subsubsection{Open sets description}\label{augOpenSet_subsub}
Continuing with the setup and notation of Subsection~\ref{glaugbl_subsubs}, let
\begin{gather*}
\wt{W}_{Y;1}=\big\{(\ell,v)\!\in\!\wt{W}_Y\!:\ell\!\not\subset\!\cN^{\fc_1}\big\}
\subset\ga_{\cN_XY},\\
\wt{W}_{Y;2}=\big\{\!\big(\ell,(v,c)\!\big)\!\otimes\!(\ell,w)\!\in\!
\ga_{\cN'\oplus\tau_Y^1}^{\R}\!\otimes_{\R}\!\pi'^*\cN^{\fc_1}\!:
c\lr{w,w}v\!+\!cw\!\in\!W_Y\big\}.
\end{gather*}
Define
\BE{OpenSetpidfn_e0}
\wt\pi\!:\wt{W}_{Y;1}\!\sqcup\!\wt{W}_{Y;2}\lra X, \quad
\wt\pi(\wt{x})=\begin{cases}
\Psi_Y(v),
&\hbox{if}~\wt{x}\!\equiv\!(\ell,v)\!\in\!\wt{W}_{Y;1};\\
\Psi_Y\big(c\lr{w,w}v\!+\!cw\big),
&\hbox{if}~\wt{x}\!\equiv\!\big(\ell,(v,c)\!\big)\!\otimes\!(\ell,w)\!\in\!\wt{W}_{Y;2}.
\end{cases}\EE
By the construction in Subsection~\ref{glaugbl_subsubs}, 
\begin{gather*}
\BL_{\cN^{\fc_1}}\!X=\big(\!(X\!-\!Y)\!\sqcup\!\wt{W}_{Y;1}\!\sqcup\!\wt{W}_{Y;2}\big)\!\big/\!\sim, \\
\begin{split}
\wt{W}_{Y;2}|_{\R\P(\cN'\oplus\tau_Y^1)-\R\P(\{0\}\oplus\tau_Y^1)}
\!-\!\R\P(\cN'\!\oplus\!\tau_Y^1)\ni\big(\ell,(v,c)\!\big)\!\otimes\!(\ell,w)
&\sim\big([\lr{w,w}v\!+\!w],c\lr{w,w}v\!+\!cw\big)\in\wt{W}_{Y;1},
\end{split}\\
\big(\wt{W}_{Y;1}\!-\!\R\P(\cN_XY)\!\big) 
\!\sqcup\!\big(\wt{W}_{Y;2}|_{\R\P(\cN'\oplus\tau_Y^1)-\R\P(\cN'\oplus\tau_Y^0)}
\!-\!\R\P(\cN'\!\oplus\!\tau_Y^1)\!\big)\ni\wt{x} 
\sim\wt\pi(\wt{x})\in X\!-\!Y.
\end{gather*}
The blowdown map is given by
\BE{OpenSetpidfn_e}\pi\!:\BL_{\cN^{\fc_1}}\!X\lra X, \qquad 
\pi\big([\wt{x}]\big)=\begin{cases}\wt{x},&\hbox{if}~\wt{x}\!\in\!X\!-\!Y;\\
\wt\pi(\wt{x}),&\hbox{if}~\wt{x}\!\in\!\wt{W}_{Y;1}\!\sqcup\!\wt{W}_{Y;2}.
\end{cases}\EE

\vspace{.2in}

\noindent
Suppose $\lr{\cdot,\cdot}'$ is an inner-product on~$\cN^{\fc_1}$ conformally equivalent
to~$\lr{\cdot,\cdot}$, i.e.~$\lr{\cdot,\cdot}'\!=\!\rho^{-1}\lr{\cdot,\cdot}$
for some $\rho\!\in\!C^{\i}(Y;\R^+)$.
Define 
$$\wt{W}_{Y;2}'\subset\ga_{\cN'\oplus\tau_Y^1}^{\R}\!\otimes_{\R}\!\pi'^*\cN^{\fc_1}
\qquad\hbox{and}\qquad
\wt\pi'\!:\wt{W}_{Y;1}\!\sqcup\!\wt{W}_{Y;2}'\lra X$$
as $\wt{W}_{Y;2}$ and $\wt\pi$ in~\eref{OpenSetpidfn_e0} and just above 
with~$\lr{\cdot,\cdot}$ replaced by~$\lr{\cdot,\cdot}'$.
The~blowup 
$$\pi'\!:\big(\BL_{\cN^{\fc_1}}\!X\big)'\lra X$$
constructed as in Subsection~\ref{glaugbl_subsubs} with~$\lr{\cdot,\cdot}$ 
replaced by~$\lr{\cdot,\cdot}'$ is described by~\eref{OpenSetpidfn_e} and
the quotient just above
with~$\wt{W}_{Y;2}$ and~$\lr{\cdot,\cdot}$ replaced by~$\wt{W}_{Y;2}'$ and~$\lr{\cdot,\cdot}'$, 
respectively.
The~map
\begin{gather*}
\wt\Psi\!:\BL_{\cN^{\fc_1}}\!X\lra \big(\BL_{\cN^{\fc_1}}\!X\big)',\\
 \wt\Psi\big([\wt{x}]\big)=\begin{cases}[\wt{x}],&\hbox{if}~
\wt{x}\!\in\!(X\!-\!Y)\!\sqcup\!\wt{W}_{Y;1};\\
\big[([\rho v,c],(\rho v',c')\!)\!\otimes\!([\rho v,c],w)\big],&\hbox{if}~
\wt{x}\!=\!([v,c],(v',c')\!)\!\otimes\!([v,c],w)\!\in\!\wt{W}_{Y;2};
\end{cases}
\end{gather*}
is then a well-defined diffeomorphism so that $\pi\!=\!\pi'\!\circ\!\wt\Psi$.\\

\noindent
Suppose instead that $\cN''\!\subset\!\cN_XY$ is another subbundle complementary to~$\cN^{\fc_1}$.
Thus,
$$\cN''=\big\{v\!+\!\al(v)\!:v\!\in\!\cN'\big\}$$
for some $\al\!\in\!\Ga(Y;\Hom(\cN',\cN^{\fc_1})\!)$ and the homomorphism
$$\wt\al\!:\cN'\lra\cN'', \qquad \wt\al(v)=v\!+\!\al(v),$$
is an isomorphism.
Define 
$$\lr{\cdot,\cdot}_{\al}\!:\cN'\!\otimes_{\R}\!\cN^{\fc_1}\lra\R^{\ge0},\quad
\lr{v,w}_{\al}=1\!-\!2\lr{\al(v),w}\!+\!\lr{\al(v),\al(v)\!}\lr{w,w}\,.$$
If $\lr{v,w}_{\al}\!=\!0$, then $\lr{\al(v),w}\!=\!1$.
Let $\pi''\!:\R\P(\cN''\!\oplus\!\tau_Y^1)\!\lra\!Y$
denote the bundle projections.\\

\noindent
The image~of
$$\wt{W}_{Y;2}^{\al}\equiv\big\{\!\big(\ell,(v,c)\!\big)\!\otimes\!(\ell,w)\!\in\!\wt{W}_{Y;2}\!:
\lr{\al(v),w}\!\neq\!1\big\}$$
in~$\BL_{\cN^{\fc_1}}\!X$ under the quotient map is open.
The complement of this image is contained in the image of $(X\!-\!Y)\!\sqcup\!\wt{W}_{Y;1}$.
Thus, $\wt{W}_{Y;2}$ in~\eref{OpenSetpidfn_e} and just above can be replaced 
by~$\wt{W}_{Y;2}^{\al}$.
Define
$$\wt{W}_{Y;2}''\subset\ga_{\cN''\oplus\tau_Y^1}^{\R}\!\otimes_{\R}\!\pi''^*\cN^{\fc_1}
\qquad\hbox{and}\qquad
\wt\pi''\!:\wt{W}_{Y;1}\!\sqcup\!\wt{W}_{Y;2}''\lra X$$
as $\wt{W}_{Y;2}$ and $\wt\pi$ in~\eref{OpenSetpidfn_e0} and just above
with~$\cN'$ and~$\pi'$ replaced by~$\cN''$ and~$\pi''$.
The~map
\begin{gather*}
\wt\phi_2\!: \wt{W}_{Y;2}^{\al}\lra\wt{W}_{Y;2}''\subset
\ga_{\cN''\oplus\tau_Y^1}^{\R}\!\otimes_{\R}\!\pi''^*\cN^{\fc_1},\\
\wt\phi_2\big(\!([v,c],(v',c')\!)\!\otimes\!([v,c],w)\!\big)
=\bigg(\!\R\Big(\frac{\wt\al(v)}{\lr{v',w}_{\al}},c\!\Big),
\Big(\frac{\wt\al(v')}{\lr{v',w}_{\al}},c'\!\Big)\!\!\bigg)
\!\otimes\!\bigg(\!\R\Big(\frac{\wt\al(v)}{\lr{v',w}_{\al}},c\!\Big),
w\!-\!\lr{w,w}\al(v')\!\!\bigg),
\end{gather*}
is well-defined and satisfies $\wt\pi\!=\!\wt\pi''\!\circ\!\wt\phi_2$ on~$\wt{W}_{Y;2}^{\al}$.
The~blowup 
$$\pi''\!:\big(\BL_{\cN^{\fc_1}}\!X\big)''\lra X$$
constructed as in Subsection~\ref{glaugbl_subsubs} with~$\cN'$  replaced by~$\cN''$ 
is described by~\eref{OpenSetpidfn_e} and the quotient just above
with~$\wt{W}_{Y;2}$ replaced by~$\wt{W}_{Y;2}''$.
The~map
$$\wt\Psi\!:\BL_{\cN^{\fc_1}}\!X\lra \big(\BL_{\cN^{\fc_1}}\!X\big)'',\qquad
 \wt\Psi\big([\wt{x}]\big)=\begin{cases}[\wt{x}],&\hbox{if}~\wt{x}\!\in\!
(X\!-\!Y)\!\sqcup\!\wt{W}_{Y;1};\\
\big[\wt\phi_2(\wt{x})\big],&\hbox{if}~\wt{x}\!\in\!\wt{W}_{Y;2}^{\al};
\end{cases}$$
is thus a well-defined diffeomorphism so that $\pi\!=\!\pi'\!\circ\!\wt\Psi$.\\

\noindent 
Suppose $(W_Y',\Psi_Y')$ is another tubular neighborhood identification for $Y\!\subset\!X$
with \hbox{$\Psi_Y'(W_Y')\!\subset\!\Psi_Y(W_Y)$} and
$h,f$ are in~\eref{augatlcond_e0} with $\Psi_1\!=\!\Psi_Y$, $\Psi_2\!=\!\Psi_Y'$, 
and $W_{\cap}\!=\!W_Y'$.
Let
\begin{gather*}
h_{11}\in\Ga\big(W_Y',\Hom_{\bF}\big(\pi_{\cN_XY}^*\cN',
\Psi_Y'^*\Psi_Y^{-1*}\pi_{\cN_XY}^*\cN'\big)\!\big),
h_{21}\in\Ga\big(W_Y',\Hom_{\bF}\big(\pi_{\cN_XY}^*\cN',
\Psi_Y'^*\Psi_Y^{-1*}\pi_{\cN_XY}^*\cN^{\fc_1}\big)\!\big),\\
\hbox{and}\qquad
h_{22}\in\Ga\big(W_Y',\Hom_{\bF}\big(\pi_{\cN_XY}^*\cN^{\fc_1},
\Psi_Y'^*\Psi_Y^{-1*}\pi_{\cN_XY}^*\cN^{\fc_1}\big)\!\big)
\end{gather*}
be the components of $h$ with respect to the decomposition $\cN_XY\!=\!\cN'\!\oplus\!\cN^{\fc_1}$.
Define 
$$\wt\pi'\!:\wt{W}_{Y;1}'\!\sqcup\!\wt{W}_{Y;2}'\lra X$$
as~$\wt\pi$ in~\eref{OpenSetpidfn_e0} and just above with~$W_Y$ replaced by~$W_Y'$.\\

\noindent
By~\eref{augatlcond_e0}, $h_{22}(v)$ is an isomorphism for every $v\!\in\!W_Y'$.
By~\eref{TNIprop_e}, $h_{11}(v)$ is an isomorphism for every~$v$ in 
a neighborhood $W_Y'^{\circ}\!\subset\!W_Y'$ of~$Y$.
Define \hbox{$\wt{W}_{Y;1}'^{\circ}\!\subset\!\wt{W}_{Y;1}'$} as above~\eref{OpenSetpidfn_e0} 
with~$W_Y$ replaced by~$W_Y'^{\circ}$.
Similarly to~\eref{wtvphalalprdfn_e0}, the~map
\BE{wtvphalalprdfn_e4}\wt\phi_1\!:\wt{W}_{Y;1}'^{\circ}\lra\ga_{\cN_XY}, \quad 
\wt\phi_1(\ell,v)=\big(\!\{h(v)\!\}(\ell),\{h(v)\!\}(v)\!\big),\EE
is then well-defined and smooth.
By~\eref{TNIequiv_e}, $\wt\phi_1(\wt{W}_{Y;1}'^{\circ})\!\subset\!\wt{W}_{Y;1}$
and $\wt\pi'\!=\!\wt\pi\!\circ\!\wt\phi_1$ on~$\wt{W}_{Y;1}'^{\circ}$.\\

\noindent
Let $|w|^2\!=\!\lr{w,w}$ for $w\!\in\!\cN^{\fc_1}$.
The smooth function 
\begin{gather*}
F\!:\wt{W}_{Y;2}'\lra \R,\\
\begin{split}
F\big(\!(\ell,(v,c)\!)\!\otimes\!(\ell,w)\!\big)=
f\big(c|w|^2v\!+\!cw\big)\!
+\!2\blr{\!\big\{h_{21}(c|w|^2v\!+\!cw)\!\big\}(v),
\big\{h_{22}(c|w|^2v\!+\!cw)\!\big\}(w)\!}&\\
+|w|^2\big|\big\{h_{21}(c|w|^2v\!+\!cw)\!\big\}(v)\big|^2&,
\end{split}\end{gather*}
is $\R^+$-valued on $\wt{W}_{Y;2}'|_{\R\P(\{0\}\oplus\tau_Y^1)}\!\cup\!\R\P(\cN'\!\oplus\!\tau_Y^1)$.
We can thus choose a neighborhood \hbox{$\wt{W}_{Y;2}'^{\circ}\!\subset\!\wt{W}_{Y;2}'$} of this set 
so that the~map
\begin{gather*}
\phi_2\!:\wt{W}_{Y;2}'^{\circ}\lra\R\P\big(\cN'\!\oplus\!\tau_Y^1\big), \\
\begin{split}
&\phi_2\big(\!([v,c],(v',c')\!)\!\otimes\!([v,c],w)\!\big)=
\R\big(\big\{h_{11}(c'|w|^2v'\!+\!c'w)\!\big\}(v),
F\big(\!\big([v,c],(v',c')\!\big)\!\otimes\!([v,c],w)\!\big)c\big),
\end{split}\end{gather*}
is well-defined and smooth.
So is then the~map
\begin{gather}\label{wtphi2dfn_e}
\wt\phi_2\!:\wt{W}_{Y;2}'^{\circ}\lra\ga_{\cN'\oplus\tau_Y^1}^{\R}\!\otimes_{\R}\!\pi'^*\cN^{\fc_1},\\
\notag\begin{split}
&\wt\phi_2\big(\!\big(\ell,(v,c)\!\big)\!\otimes\!(\ell,w)\!\big)
=\bigg(\!\phi_2\big(\!(\ell,(v,c)\!)\!\otimes\!(\ell,w)\!\big),
\Big(\frac{\big\{h_{11}(c|w|^2v\!+\!cw)\!\big\}(v)}{F\big(\!(\ell,(v,c)\!)\!\otimes\!(\ell,w)\!\big)},
c\!\Big)\!\!\bigg)\\
&\hspace{1.5in}\otimes\!\Big(\phi_2\big(\!(\ell,(v,c)\!)\!\otimes\!(\ell,w)\!\big),
\big\{\!h_{22}(c|w|^2v\!+\!cw)\!\big\}(w)\!+\!
|w|^2\big\{\!h_{21}(c|w|^2v\!+\!cw)\!\big\}(v)\!\Big).
\end{split}\end{gather}
By~\eref{augatlcond_e0} and~\eref{TNIequiv_e} with $v$ replaced by~$c|w|^2v\!+\!cw$, 
$\wt\phi_2(\wt{W}_{Y;2}'^{\circ})\!\subset\!\wt{W}_{Y;2}$
and $\wt\pi'\!=\!\wt\pi\!\circ\!\wt\phi_2$ on~$\wt{W}_{Y;2}'^{\circ}$.\\

\noindent
We can assume that $\wt\pi'(\wt{W}_{Y;2}'^{\circ})\!\subset\!W_Y'^{\circ}$.
The $\cN^{\fc_1}$-augmented blowup 
$$\pi'\!:(\BL_{\cN^{\fc_1}}\!X)'\lra X$$ 
of~$X$ along~$Y$ constructed as in Subsection~\ref{glaugbl_subsubs} with~$(W_Y,\Psi_Y)$ 
replaced by~$(W_Y',\Psi_Y')$ can then be described as in~\eref{OpenSetpidfn_e} and just above with 
$\wt{W}_{Y;1},\wt{W}_{Y;2}$ replaced by~$\wt{W}_{Y;1}'^{\circ},\wt{W}_{Y;2}'^{\circ}$.
The~map
$$\wt\Psi\!:\big(\BL_{\cN^{\fc_1}}\!X\big)'\lra \BL_{\cN^{\fc_1}}\!X,\qquad
 \wt\Psi\big([\wt{x}]\big)=\begin{cases}[\wt{x}],&\hbox{if}~\wt{x}\!\in\!X\!-\!Y;\\
\big[\wt\phi_i(\wt{x})\big],&\hbox{if}~\wt{x}\!\in\!\wt{W}_{Y;i}'^{\circ},\,i\!=\!1,2;
\end{cases}$$
is thus a well-defined diffeomorphism so that $\pi'\!=\!\pi\!\circ\!\wt\Psi$.

\subsection{Local construction}
\label{locaugbl_subs}

\noindent
Let $\fc\!\in\!\Z^+$, $\fc_1\!\in\![\fc\!-\!1]$, 
$\fc_2\!=\!\fc\!-\!\fc_1$, $\dbsqbr{\fc_1}\!=\!\{0\}\!\sqcup\![\fc_1]$,
$$\ga_{\fc;\fc_1}^1=\big\{\!\big([r_1,\ldots,r_{\fc}],v\big)\!\in\!\ga^{\R}_{\fc}\!:
(r_1,\ldots,r_{\fc_1})\!\neq\!0\big\}\lra \R\P^{\fc-1},
~~\hbox{and}~~
\ga_{\fc;\fc_1}^2=\big(\ga_{\fc_1+1}^{\R}\big)^{\!\oplus\fc_2}
\lra\R\P^{\fc_1}.$$
Let $\pi_{\fc;\fc_1}^1\!:\ga_{\fc;\fc_1}^1\!\lra\!\R^{\fc}$ be the restriction of 
the projection to the second coordinate on~$\R\P^{\fc-1}\!\times\!\R^{\fc}$.
For any $(\ell,(v_{ij})_{i\in\dbsqbr{\fc_1},j\in[\fc_2]})\!\in\!\ga_{\fc;\fc_1}^2$
with $\ell\!=\![(r_i)_{i\in\dbsqbr{\fc_1}}]$, 
there exists $\la\!\in\!\R^{\fc_2}$ so that 
$$v_{ij}=r_i\la_j \qquad\forall~i\!\in\!\dbsqbr{\fc_1},\,j\!\in\![\fc_2].$$
Thus, the~maps 
\begin{gather*}
\pi_{\fc;\fc_1}^2\!:\ga_{\fc;\fc_1}^2\lra\R^{\fc},~~
\pi_{\fc;\fc_1}^2\big(\ell,(r_i\la_j)_{i\in\dbsqbr{\fc_1},j\in[\fc_2]}\big)
=\bigg(\!r_0\bigg(\sum_{j=1}^{\fc_2}\!\la_j^2\bigg)
\big(r_1,\ldots,r_{\fc_1}\big),r_0\big(\la_1,\ldots,\la_{\fc_2}\big)\!\!\bigg),\\
\phi_{\fc;\fc_1;0}^{12}\!:\ga_{\fc;\fc_1}^{2;1}\!\equiv\!\big\{\!
\big(\ell,(v_{ij})_{i\in\dbsqbr{\fc_1},j\in[\fc_2]}\big)
\!\in\!\ga_{\fc;\fc_1}^2\!:
(v_{ij})_{i\in[\fc_1],j\in[\fc_2]}\!\neq\!0\!\in\!\R^{\fc_1\fc_2}\big\}\lra \R\P^{\fc-1},\\
\phi_{\fc;\fc_1;0}^{12}
\big(\ell,(r_i\la_j)_{i\in\dbsqbr{\fc_1},j\in[\fc_2]}\big)
=\bigg[\!\bigg(\sum_{j=1}^{\fc_2}\!\la_j^2\bigg)
\big(r_1,\ldots,r_{\fc_1}\big),\big(\la_1,\ldots,\la_{\fc_2}\big)\!\bigg],
\end{gather*}
are well-defined and smooth.\\

\noindent
The~map
$$\wt\phi_{\fc;\fc_1}^{12}\!\equiv\!
\big(\phi_{\fc;\fc_1;0}^{12},\pi_{\fc;\fc_1}^2\big)
\!:\ga_{\fc;\fc_1}^{2;1}\lra 
\ga_{\fc;\fc_1}^{1;2}
\!\equiv\!\big\{\!
\big([(r_j)_{j\in[\fc]}],v\big)\!\in\!\ga^1_{\fc;\fc_1}\!:
(r_j)_{j\in[\fc]-[\fc_1]}\!\neq\!0\big\}$$
is a well-defined diffeomorphism onto an open subspace of~$\ga_{\fc;\fc_1}^1$.
It satisfies
\BE{pifcfc1dfn_e}
\pi_{\fc;\fc_1}^2\big|_{\ga_{\fc;\fc_1}^{2;1}}=
\pi_{\fc;\fc_1}^1\!\circ\!\wt\phi_{\fc;\fc_1}^{12}.\EE
The space
$$\ga_{\fc;\fc_1}^{\au}=\big(\ga_{\fc;\fc_1}^1\!\sqcup\!\ga_{\fc;\fc_1}^2
\big)\!\big/\!\sim,\quad 
\ga_{\fc;\fc_1}^1\ni \wt\phi_{\fc;\fc_1}^{12}(\wt{x})
\sim \wt{x}\in \ga_{\fc;\fc_1}^{2;1}\subset\ga_{\fc;\fc_1}^2,$$
is a smooth manifold.
By~\eref{pifcfc1dfn_e}, the map
$$\pi\!:\ga_{\fc;\fc_1}^{\au}\lra\R^{\fc}, \qquad
\pi\big([\wt{r}]\big)=\begin{cases} 
\pi_{\fc;\fc_1}^1(\wt{r}),
&\hbox{if}~\wt{r}\!\in\!\ga_{\fc;\fc_1}^1;\\
\pi_{\fc;\fc_1}^2(\wt{r}),
&\hbox{if}~\wt{r}\!\in\!\ga_{\fc;\fc_1}^2;
\end{cases}$$
is well-defined and smooth.\\

\noindent
Suppose $Y\!\subset\!X$ is a closed submanifold of real codimension~$\fc$ and dimension~$m$
as before.
For a coordinate chart 
$$\vph\!\equiv\!(\vph_1,\ldots,\vph_{\fc+m})\!:U\lra\R^{\fc}\!\times\!\R^m$$
for~$Y$ in~$X$, the subspaces
\begin{equation*}\begin{split}
\BLauk_Y\vph&\equiv\big\{\!(\wt{r},x)\!\in\!\ga^k_{\fc;\fc_1}\!\times\!U\!:
\pi_{\fc;\fc_1}^k(\wt{r})\!=\!\big(\vph_1(x),\ldots,\vph_{\fc}(x)\!\big)\!\in\!\R^{\fc}\big\}
~~\hbox{with}~~k=1,2 \quad\hbox{and}\\
\BLau_Y\vph&\equiv \big\{\!(\wt{r},x)\!\in\!\ga^{\au}_{\fc;\fc_1}\!\times\!U\!:
\pi(\wt{r})\!=\!\big(\vph_1(x),\ldots,\vph_{\fc}(x)\!\big)
\!\in\!\R^{\fc}\big\} 
\end{split}\end{equation*}
are closed submanifolds of~$\ga^k_{\fc;\fc_1}\!\times\!U$
 and $\ga^{\au}_{\fc;\fc_1}\!\times\!U$, respectively.
Let
$$\BLauba_Y\vph\equiv\big\{\!(\wt{r},x)\!\in\!\BLaub_Y\vph\!:
\wt{r}\!\in\!\ga^{2;1}_{\fc;\fc_1}\big\}.$$
We denote~by
$$\pi^k_U\!:\BLauk_Y\vph\lra U \quad\hbox{and}\quad
\pi_U\!:\BLau_Y\vph\lra U$$
the projection maps.
The restriction
$$\pi_U\!:\BLau_Y\vph\!-\!\pi_U^{-1}(Y) \lra U\!-\!Y$$
is a diffeomorphism.\\

\noindent
We call an $\R$-atlas 
$\{\vph_{\al}\!:U_{\al}\!\lra\!\R^{\fc}\!\times\!\R^m\}_{\al\in\cI}$ 
for~$Y$ in~$X$ \sf{$\fc_1$-augmented} if for all $\al,\al'\!\in\!\cI$ there exist
$h_{\al\al'}$ as in~\eref{halalprdfn_e} and
\hbox{$f_{\al\al'}\!\in\!C^{\i}(\vph_{\al'}(U_{\al}\!\cap\!U_{\al'});\R^+)$} such~that
\BE{augatlcond_e}\begin{split}
& \big\{h_{\al\al'}(x)\!\big\}\big(0^{\fc_1}\!\times\!\R^{\fc_2}\big)
\subset 0^{\fc_1}\!\times\!\R^{\fc_2}\\
&\big|\big\{h_{\al\al'}(x)\!\big\}\!(0,r)\big|^2
=f_{\al\al'}(x)|r|^2
\end{split}
\qquad\forall~x\!\in\!\vph_{\al'}\big(U_{\al}\!\cap\!U_{\al'}\big),\,
r\!\in\!\R^{\fc_2}.\EE
By the first condition in~\eref{augatlcond_e},
the images of $0^{\fc_1}\!\times\!\R^{\fc_2}\!\times\!(U_{\al}\!\cap\!Y)$
under the vector bundle isomorphisms~$\Phi_{\al}$ in~\eref{NXYisom_e}
determine a corank~$\fc_1$ subbundle $\cN^{\fc_1}\!\subset\!\cN_XY$.
We will then call a $\fc_1$-augmented atlas 
$\{\vph_{\al}\}_{\al\in\cI}$ \sf{$\cN^{\fc_1}$-augmented}.\\

\noindent
By the first condition in~\eref{augatlcond_e},
there exists a neighborhood~$U_{\al\al'}$ of~$U_{\al}\!\cap\!U_{\al'}\!\cap\!Y$ 
in~$U_{\al}\!\cap\!U_{\al'}$ so that the composition
$$\R^{\fc_1}\!\times\!0^{\fc_2}\xlra{h_{\al\al'}(x)}\R^{\fc_1}\!\times\!\R^{\fc_2}\lra\R^{\fc_1}$$
is invertible for every $x\!\in\!\vph_{\al'}(U_{\al\al'})$.
Let
$$\wt{U}_{\al\al'}^1=\big\{\pi_{U_{\al'}}^1\big\}^{-1}\big(U_{\al\al'}\big)
\subset 
\BLaua_Y\big(\vph_{\al'}|_{U_{\al}\cap U_{\al'}}\big).$$
Similarly to~\eref{wtvphalalprdfn_e4}, the~map
\begin{gather*}
\wt\vph_{\al\al'}^1\!:\wt{U}_{\al\al'}^1
\lra \BLaua_Y\big(\vph_{\al}|_{U_{\al\al'}}\big), \\
\wt\vph_{\al\al'}^1\big(\!(\ell,v),x\big)
=\big( \big(\!\big\{\!h_{\al\al'}(\vph_{\al'}(x)\!)\!\big\}(\ell),
\big\{\!h_{\al\al'}(\vph_{\al'}(x)\!)\!\big\}(v)\!\big),x\big),
\end{gather*}
is well-defined and smooth.
It satisfies $\pi_{U_{\al'}}^1\!=\!\pi_{U_{\al}}^1\!\circ\!\wt\vph_{\al\al'}^1$
on~$\wt{U}_{\al\al'}^1$.
Similarly to~\eref{wtphi2dfn_e}, $h_{\al\al'}$ and $f_{\al\al'}$ determine a diffeomorphism
$$\wt\vph_{\al\al'}^2\!:\wt{U}_{\al\al'}^2
\lra \BLaub_Y\big(\vph_{\al}|_{U_{\al\al'}}\big)$$
from an open neighborhood of 
$\BLaub_Y(\vph_{\al'}|_{U_{\al\al'}})\!-\!\BLauba_Y(\vph_{\al'}|_{U_{\al\al'}})$
in~$\BLaub_Y(\vph_{\al'}|_{U_{\al\al'}})$ onto an open subset 
of~$\BLaub_Y(\vph_{\al}|_{U_{\al\al'}})$; 
see \cite[Section~3.2]{RDMbl} for details.
It satisfies $\pi_{U_{\al'}}^2\!=\!\pi_{U_{\al}}^2\!\circ\!\wt\vph_{\al\al'}^2$
on~$\wt{U}_{\al\al'}^2$.\\

\noindent
By the above, the~map
\begin{gather*}
\wt\vph_{\al\al'}\!:
\BLau_Y\big(\vph_{\al'}|_{U_{\al}\cap U_{\al'}}\big)
\lra \BLau_Y\big(\vph_{\al}|_{U_{\al}\cap U_{\al'}}\big),\\
\wt\vph_{\al\al'}\big([\wt{x}']\big)=
\begin{cases}\big[\wt\vph_{\al\al'}^k(\wt{x}')\big],\!\!
&\hbox{if}~\wt{x}'\!\in\!\wt{U}_{\al\al'}^k,\,k\!=\!1,2;\\
[\wt{x}],&
\hbox{if}~\pi_{U_{\al'}}([\wt{x}'])\!=\!\pi_{U_{\al}}([\wt{x}])\!\not\in\!Y;
\end{cases}
\end{gather*}
is then a well-defined diffeomorphism that satisfies
$$\pi_{U_{\al'}}\big|_{\BLau_Y\big(\vph_{\al'}|_{U_{\al}\cap U_{\al'}}\big)}
=\pi_{U_{\al}}\!\circ\!\wt\vph_{\al\al'}.$$
By the uniqueness of continuous extensions and the cocycle condition
for the overlap maps~$\vph_{\al\al'}$,
the maps~$\wt\vph_{\al\al'}$ satisfy the cocycle condition~\eref{cocyclecond_e}
with~$\bF\!=\!\fc_1$.
For a $\fc_1$-augmented atlas $\{\vph_{\al}\}_{\al\in\cI}$ for~$Y$ in~$X$,
we define the corresponding $\fc_1$-augmented blowup
\BE{blowmapaugdfn_e}\pi\!:\BL_Y^{\fc_1}X
\!\equiv\!\Big(\!\!
(X\!-\!Y)\!\sqcup\!\bigsqcup_{\al\in\cI}\!\!\BLau_Y\vph_{\al}\!\Big)\!\!\Big/\!\!\!\sim
\lra X\EE
as in~\eref{blowmapdfn_e} and just above with~$\bF$ replaced by~$\fc_1$.\\

\noindent
We call two $\fc_1$-augmented atlases for~$Y$ in~$X$
 \sf{$\fc_1$-augmented equivalent} if their union is still  
a $\fc_1$-augmented atlas for~$Y$ in~$X$.
Two such atlases determine the same corank~$\fc_1$-subbundle \hbox{$\cN^{\fc_1}\!\subset\!\cN_XY$}
and isomorphic $\fc_1$-augmented blowups of~$X$ along~$Y$.

\subsection{Equivalence}
\label{equivaugbl_subs}

\noindent
We now refine the two constructions of Section~\ref{equivRCbl_subs} to incorporate
the additional data used in the augmented blowup constructions
of Sections~\ref{glaugbl_subs} and~\ref{locaugbl_subs}.
The following statement will be used to conclude that the augmented blowups 
we obtain as a result are isomorphic.

\begin{lmm}[{\cite[Lemma~3.2]{RDMbl}}]\label{AugBl_lmm}
Suppose $X$, $Y$, $\fc$, and $\pi$ are as in Lemma~\ref{StanBl_lmm} with $\bF\!=\!\R$,
$\fc_1\!\in\![\fc\!-\!1]$, and
$\{\vph_{\al}\!\equiv\!(\vph_{\al;1},\ldots,\vph_{\al;\fc+m})\!:
U_{\al}\!\lra\!\R^{\fc}\!\times\!\R^m\}_{\al\in\cI}$ 
is a \hbox{$\fc_1$-augmented} atlas of charts for~$Y$ in~$X$.
Let \hbox{$\pi'\!:\BLau_YX\!\lra\!X$} be the $\fc_1$-augmented blowup of~$X$ along~$Y$
determined by this atlas as in Section~\ref{locaugbl_subs}.
If there exists a collection 
$$\big\{\big(\wt\phi_{\al;i}^k\!\equiv\!
(\wt\phi_{\al;i;1}^k,\ldots,\wt\phi_{\al;i;\fc+m}^k)
\!:\wt{U}_{\al;i}^k\!\lra\!\R^{\fc+m}\big)\!:
k\!=\!1,2,\,i\!\in\![\fc_1]\,\hbox{if}\,k\!=\!1,\,
i\!\in\!\dbsqbr{\fc_1}\,\hbox{if}\,k\!=\!2\big\}_{\al\in\cI}$$
of charts on~$\wt{X}$ so that 
\begin{gather*}
\pi^{-1}(U_{\al})=\bigcup_{i=1}^{\fc_1}\!\wt{U}_{\al;i}^1\cup
\bigcup_{i=0}^{\fc_1}\!\wt{U}_{\al;i}^2, \quad
\wt\phi_{\al;i;j}^k=\vph_{\al;j}\!\circ\!\pi|_{\wt{U}_{\al;i}^k}
~~\hbox{if}~j\!>\!\fc~\hbox{or}~(k,j)\!=\!(1,i),\\
\vph_{\al;j}\!\circ\!\pi|_{\wt{U}_{\al;i}^1}=
\wt\phi_{\al;i;j}^1\!\cdot\!\{\vph_{\al;i}\!\circ\!\pi|_{\wt{U}_{\al;i}^1}\}
~\hbox{if}\,j\!\in\![\fc]\!-\!\{i\},\\
%\end{gather*}
%\begin{gather*}
\vph_{\al;j}\!\circ\!\pi|_{\wt{U}_{\al;i}^2}=
\wt\phi_{\al;i;j}^2\!\cdot\!
\begin{cases}1,&\hbox{if}~i\!=\!0,\,j\!\in\![\fc]\!-\![\fc_1];\\
\wt\phi_{\al;i;1}^2,&\hbox{if}~i\!\in\![\fc_1],\,j\!\in\![\fc]\!-\![\fc_1];
\end{cases}\\
\vph_{\al;j}\!\circ\!\pi|_{\wt{U}_{\al;i}^2}=
\bigg(\!\sum_{j'=\fc_1+1}^{\fc}\!\!\!\!(\wt\phi_{\al;i;j'}^2)^2\!\!\bigg)
\!\cdot\!
\begin{cases}\wt\phi_{\al;i;j}^2,&\hbox{if}~i\!=\!0,\,j\!\le\!\fc_1;\\
\wt\phi_{\al;i;1}^2\wt\phi_{\al;i;j}^2,&\hbox{if}~1\!\le\!i\!<\!j\!\le\!\fc_1;\\
\wt\phi_{\al;i;1}^2,&\hbox{if}~i\!=\!j;\\
\wt\phi_{\al;i;1}^2\wt\phi_{\al;i;j+1}^2,&\hbox{if}~i\!>\!j;
\end{cases}
\end{gather*} 
for every~$\al\!\in\!\cI$, then the maps $\pi$ and $\pi'$ are diffeomorphic.
\end{lmm}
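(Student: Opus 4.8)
The plan is to follow the template of Lemma~\ref{StanBl_lmm} and construct the comparison map chart by chart. Since $\pi$ and $\pi'$ both restrict to diffeomorphisms over $X\!-\!Y$, I would first set
$$\wt\Psi\!\equiv\!(\pi')^{-1}\!\circ\!\pi\!:\wt{X}\!-\!\pi^{-1}(Y)\lra\BLau_YX\!-\!(\pi')^{-1}(Y),$$
a diffeomorphism intertwining the two blowdown maps. Everything then reduces to showing that $\wt\Psi$ extends smoothly across the exceptional loci to a diffeomorphism of the total spaces; the hypotheses are arranged so that, read in the charts $\wt\phi_{\al;i}^k$ on~$\wt{X}$ and the standard charts on~$\BLau_YX$, this extension is the identity.

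The core step is a coordinate comparison. I would record the standard charts $\psi_{\al;i}^k$ on $\BLau_Y\vph_{\al}$ induced by the standard charts on $\ga^1_{\fc;\fc_1}$ (for $i\!\in\![\fc_1]$) and on $\ga^2_{\fc;\fc_1}$ (for $i\!\in\!\dbsqbr{\fc_1}$), and expand the defining equation $\pi^k_{\fc;\fc_1}(\wt{r})\!=\!(\vph_{\al;1}(x),\ldots,\vph_{\al;\fc}(x))$ in each of them. This exhibits the tuple $(\vph_{\al;j}\!\circ\!\pi')_{j\in[\fc+m]}$ as one explicit substitution $F_{\al;i}^k$ applied to the coordinates $\psi_{\al;i}^k$: for $k\!=\!1$ the map $F_{\al;i}^1$ reproduces the relations governing the $k\!=\!1$ charts in the statement, and for $k\!=\!2$ the map $F_{\al;i}^2$ reproduces the four-case formulas, in both cases with $\psi_{\al;i;j}^k$ in place of $\wt\phi_{\al;i;j}^k$. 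The hypotheses assert that the \emph{same} substitution $F_{\al;i}^k$ sends the coordinates $\wt\phi_{\al;i}^k$ to $(\vph_{\al;j}\!\circ\!\pi)_{j\in[\fc+m]}$. Because $\pi\!=\!\pi'\!\circ\!\wt\Psi$ gives $(\vph_{\al;j}\!\circ\!\pi)_j\!=\!(\vph_{\al;j}\!\circ\!\pi')_j\!\circ\!\wt\Psi$, this yields $F_{\al;i}^k(\psi_{\al;i}^k\!\circ\!\wt\Psi)\!=\!F_{\al;i}^k(\wt\phi_{\al;i}^k)$ on $\wt{U}_{\al;i}^k$. Over $X\!-\!Y$ the substitution $F_{\al;i}^k$ is injective, being the coordinate form of $\vph_{\al}$ composed with a blowdown that is a diffeomorphism there, so $\psi_{\al;i}^k\!\circ\!\wt\Psi\!=\!\wt\phi_{\al;i}^k$ on $\wt{U}_{\al;i}^k\!-\!\pi^{-1}(Y)$.

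To extend across the exceptional loci I would note that $\vph_{\al;1}\!\circ\!\pi,\ldots,\vph_{\al;\fc}\!\circ\!\pi$ cut out $\pi^{-1}(Y)$ as a positive-codimension subset of each chart, so $\wt{U}_{\al;i}^k\!-\!\pi^{-1}(Y)$ is dense; hence the identity $\psi_{\al;i}^k\!\circ\!\wt\Psi\!=\!\wt\phi_{\al;i}^k$ extends by continuity to all of $\wt{U}_{\al;i}^k$, which simultaneously shows that $\wt\Psi$ maps $\wt{U}_{\al;i}^k$ into the corresponding standard chart domain and is there the smooth map $(\psi_{\al;i}^k)^{-1}\!\circ\!\wt\phi_{\al;i}^k$, a local diffeomorphism. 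These local extensions agree on overlaps by uniqueness of continuous extension from the dense set $\wt{X}\!-\!\pi^{-1}(Y)$, so $\wt\Psi$ is a globally well-defined smooth map with $\pi\!=\!\pi'\!\circ\!\wt\Psi$. It is a proper local diffeomorphism (properness coming from that of $\pi$ via $\wt\Psi^{-1}(K)\!\subset\!\pi^{-1}(\pi'(K))$) that is bijective over $X\!-\!Y$; a degree count then forces it to be globally bijective, hence a diffeomorphism, and $\pi$ and $\pi'$ are diffeomorphic.

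I expect the main obstacle to be the $k\!=\!2$ bookkeeping in the core step: verifying that the four-case formulas of the statement are exactly the coordinate form of the blowdown $\pi^2_U\!:\BLaub_Y\vph\!\lra\!U$ in the $i$-th standard chart of $\ga^2_{\fc;\fc_1}\!=\!(\ga^{\R}_{\fc_1+1})^{\oplus\fc_2}$, which requires unwinding $\pi^2_{\fc;\fc_1}$ through the parametrization $v_{ij}\!=\!r_i\la_j$, together with checking that the $k\!=\!1$ and $k\!=\!2$ extensions are mutually compatible across the overlap $\ga^{2;1}_{\fc;\fc_1}$ glued by $\wt\phi^{12}_{\fc;\fc_1}$ via~\eref{pifcfc1dfn_e}. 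These are exactly the local computations carried out in \cite[Section~3.2]{RDMbl}.
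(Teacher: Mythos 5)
This lemma is not actually proved in the paper: it is imported as a black box from \cite[Lemma~3.2]{RDMbl}, so there is no in-paper argument to compare yours against. Judged on its own terms, your proposal is the argument one would expect, and its key claims do check out: the hypothesized relations say precisely that the charts $\wt\phi^k_{\al;i}$ on $\wt{X}$ stand in the same relation to $\vph_\al\!\circ\!\pi$ as your standard charts $\psi^k_{\al;i}$ on $\BLau_Y\vph_\al$ (induced by the charts of $\ga^1_{\fc;\fc_1}$ and $\ga^2_{\fc;\fc_1}$) stand to $\vph_\al\!\circ\!\pi'$. I verified the $k\!=\!2$ bookkeeping you defer: in the $i$-th chart with $i\!\in\![\fc_1]$ the slots are $r_0/r_i$, then the $r_j/r_i$, then the $\la_j$, and the blowdown $\pi^2_{\fc;\fc_1}$ reproduces exactly the four-case display; moreover each substitution $F^k_{\al;i}$ is injective precisely on the locus corresponding to $\wt{U}^k_{\al;i}\!-\!\pi^{-1}(Y)$.

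Two steps deserve tightening, though neither is fatal. First, writing $\psi^k_{\al;i}\!\circ\!\wt\Psi$ on $\wt{U}^k_{\al;i}\!-\!\pi^{-1}(Y)$ presupposes that $\wt\Psi$ carries this set into the domain of $\psi^k_{\al;i}$, which your write-up only deduces afterwards. It does hold: off $\pi^{-1}(Y)$ the relations force the relevant coordinates to be nonzero (for $k\!=\!1$, if $\wt\phi^1_{\al;i;i}\!=\!\vph_{\al;i}\!\circ\!\pi$ vanished at $\wt{x}$, then all $\vph_{\al;j}\!\circ\!\pi$ with $j\!\in\![\fc]$ would vanish and $\pi(\wt{x})\!\in\!Y$ by \eref{vphYcond_e}; similarly for $k\!=\!2$ with the last $\fc_2$ coordinates), and the unique $\pi'$-preimage of such a point of $U_\al\!-\!Y$ lies in the $i$-th standard chart. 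A cleaner route reverses the logic: the image of $\psi^k_{\al;i}$ is the full preimage $(F^k_{\al;i})^{-1}(\vph_\al(U_\al))$, which contains the image of $\wt\phi^k_{\al;i}$, so $\Theta^k_{\al;i}\!\equiv\!(\psi^k_{\al;i})^{-1}\!\circ\!\wt\phi^k_{\al;i}$ is defined and a local diffeomorphism on all of $\wt{U}^k_{\al;i}$; it satisfies $\pi'\!\circ\!\Theta^k_{\al;i}\!=\!\pi$ upon applying $\vph_\al$ and $F^k_{\al;i}$, hence agrees with $(\pi')^{-1}\!\circ\!\pi$ off $\pi^{-1}(Y)$ by injectivity of $\pi'$ there, and your density argument then glues. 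Second, the closing degree count is valid (a proper local diffeomorphism is a finite covering of each component of $\BLau_YX$, and the degree is $1$ on the dense set $\BLau_YX\!-\!(\pi')^{-1}(Y)$), but it can be replaced by something more elementary: injectivity, since two preimages of one point would, by the local diffeomorphism property, yield two preimages of nearby non-exceptional points; surjectivity, since the image is open, closed by properness, and contains a dense set.
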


\noindent
Suppose $Y\!\subset\!X$, $\fc$, $\fc_1$, $\cN^{\fc_1}\!\subset\!\cN_XY$, 
$\lr{\cdot,\cdot}$, and 
$(W_Y,\Psi_Y)$ are as in Subsection~\ref{glaugbl_subsubs}.
Let $m$ be the dimension of~$Y$, \hbox{$\fc_2\!=\!\fc\!-\!\fc_1$}, and
\hbox{$\pi\!:\wt{X}\!\lra\!X$}
be the corresponding augmented blowup of~$X$ along~$Y$.
Choose a collection~$\{\Phi_{\al}\}_{\al\in\cI}$ of trivializations  of~$\cN_XY$ as in~\eref{Phialdfn_e}
with 
$$\Phi_{\al}(\cN^{\fc_1})\subset 0^{\fc_1}\!\times\!\R^{\fc_2}\!\times\!\R^m,\quad
\big|\big(\Phi_{\al;\fc_1+1}(w),\ldots,\Phi_{\al;\fc}(w)\!\big)\!\big|^2
=\lr{w,w} ~~\forall\,w\!\in\!\cN^{\fc_1}\big|_{U_{Y;\al}}.$$
The corresponding transition maps~$h_{\al\al'}$ in~\eref{halalprequiv_e} then satisfy
$$\big\{\!h_{\al\al'}(x)\!\big\}(0^{\fc_1}\!\times\!\R^{\fc_2})=0^{\fc_1}\!\times\!\R^{\fc_2},
\quad
\big|\big\{\!h_{\al\al'}(x)\!\big\}(0,r)\big|^2=|r|^2
\quad\forall~x\!\in\!U_{Y;\al}\!\cap\!U_{Y;\al'},\,r\!\in\!\R^{\fc_2}.$$
The overlap map~$\vph_{\al\al'}$ between the coordinate charts~$\vph_{\al}$ and~$\vph_{\al'}$
in~\eref{equivchartdfn_e} now satisfies~\eref{augatlcond_e} with \hbox{$f_{\al\al'}\!=\!1$}.
The image of $0^{\fc_1}\!\times\!\R^{\fc_2}\!\times\!U_{Y;\al}$
under the vector bundle isomorphism in~\eref{NXYisom_e}
is the subbundle $\cN^{\fc_1}\!\subset\!\cN_XY$.
Thus, the collection $\{\vph_{\al}\}_{\al\in\cI}$ is an $\cN^{\fc_1}$-augmented
atlas for~$Y$ in~$X$.
It is immediate that 
$(\cN^{\fc_1},\lr{\cdot,\cdot}\!)$-equivalent tubular neighborhood identifications for $Y\!\subset\!X$ 
produce $\fc_1$-augmented atlases for $Y\!\subset\!X$ that are $\fc_1$-augmented equivalent.\\

\noindent
With the notation as in Subsection~\ref{augOpenSet_subsub} and $\al\!\in\!\cI$, let
\begin{equation*}\begin{split}
\wt{W}_{Y;1}\big|_{U_{Y;\al}}&=\big\{\!(\ell,v)\!\in\!\wt{W}_{Y;1}\!:
\ell\!\subset\!\cN_XY|_{U_{Y;\al}}\big\} \qquad\hbox{and}\\
\wt{W}_{Y;2}\big|_{U_{Y;\al}}&=\big\{\!\big(\ell,(v,c)\!\big)\!\otimes\!(\ell,w)\!\in\!\wt{W}_{Y;2}\!:
\ell\!\subset\!\cN_XY|_{U_{Y;\al}}\!\oplus\!\tau_{U_{Y;\al}}^1\big\}.
\end{split}\end{equation*}				
The union of these two open subsets of~$\wt{X}$ is the preimage of 
$U_{\al}\!\equiv\!\Psi_Y(W_Y|_{U_{Y;\al}})$ 
under the blowdown map \hbox{$\pi\!:\wt{X}\!\lra\!X$}.
For each $i\!\in\![\fc_1]$, the subspaces 
\begin{equation*}\begin{split}
\wt{U}_{\al;i}^1&\equiv\big\{\!\big([v],v'\big)\!\in\!\wt{W}_{Y;1}|_{U_{Y;\al}}\!:
\Phi_{\al;i}(v)\!\neq\!0\big\}\subset \wt{X} \qquad\hbox{and}\\
\wt{U}_{\al;i}^2&\equiv\big\{
\!\big([v,c],(v',c')\!\big)\!\otimes\!([v,c],w)\!\in\!\wt{W}_{Y;2}|_{U_{Y;\al}}\!:
\Phi_{\al;i}(v)\!\neq\!0\big\}\subset \wt{X}
\end{split}\end{equation*}
are open.
So is the subspace
$$\wt{U}_{\al;0}^2\equiv\big\{
\!\big([v,c],(v',c')\!\big)\!\otimes\!([v,c],w)\!\in\!\wt{W}_{Y;2}|_{U_{Y;\al}}\!:
c\!\neq\!0\big\}\subset \wt{X}.$$
The unions of the sets $\wt{U}_{\al;i}^1$ with $i\!\in\![\fc_1]$ and 
$\wt{U}_{\al;i}^2$ with $i\!\in\!\dbsqbr{\fc_1}$ are~$\wt{W}_{Y;1}\big|_{U_{Y;\al}}$
and~$\wt{W}_{Y;2}\big|_{U_{Y;\al}}$, respectively.
Below we define coordinate charts
$$\wt\phi_{\al;i}^k\!\equiv\!\big(\wt\phi_{\al;i;1}^k,\ldots,\wt\phi_{\al;i;\fc+m}^k\big)\!:
\wt{U}_{\al;i}^k\lra\R^{\fc+m}$$
on~$\wt{X}$ as in Lemma~\ref{AugBl_lmm}.\\

\noindent
For each $i\!\in\![\fc_1]$, let
$$\wt\phi_{\al;i;j}^1\big([v],v'\big)
=\begin{cases}\Phi_{\al;j}(v)/\Phi_{\al;i}(v),&\hbox{if}~j\!\in\![\fc]\!-\!\{i\};\\
\Phi_{\al;j}(v'),&\hbox{if}~j\!\in\!\{i\}\!\cup\!\big([\fc\!+\!m]\!-\![\fc]).
\end{cases}$$
We similarly define
$$\wt\phi_{\al;i;j}^2\big(\!\big([v,c],(v',c')\!\big)\!\otimes\!([v,c],w)\!\big)
=\begin{cases}c/\Phi_{\al;i}(v),&\hbox{if}~j\!=\!1;\\
\Phi_{\al;j-1}(v)/\Phi_{\al;i}(v),&\hbox{if}~j\!\in\![i]\!-\!\{1\};\\
\Phi_{\al;j}(v)/\Phi_{\al;i}(v),&\hbox{if}~j\!\in\![\fc_1]\!-\![i];\\
\Phi_{\al;i}(v')\Phi_{\al;j}(w),&\hbox{if}~j\!\in\![\fc]\!-\![\fc_1];\\
\Phi_{\al;j}(c'w),&\hbox{if}~j\!\in\![\fc\!+\!m]\!-\![\fc].
\end{cases}$$
Finally, let
$$\wt\phi_{\al;0;j}^2\big(\!\big([v,c],(v',c')\!\big)\!\otimes\!([v,c],w)\!\big)
=\begin{cases}\Phi_{\al;j}(v)/c,&\hbox{if}~j\!\in\![\fc_1];\\
\Phi_{\al;j}(c'w),&\hbox{if}~j\!\in\![\fc\!+\!m]\!-\![\fc_1].
\end{cases}$$
The collection $\{\wt\phi_{\al;i}^k\!:\wt{U}_{\al;i}^k\!\lra\!\R^{\fc+m}\}$ 
satisfies the conditions of Lemma~\ref{AugBl_lmm}.
Thus, the $\cN^{\fc_1}$-augmented blowup $\wt{X}\!\equiv\!\BL_{\cN^{\fc_1}}X$ obtained from 
the subbundle $\cN'\!\subset\!\cN_XY$, inner-product~$\lr{\cdot,\cdot}$ on~$\cN^{\fc_1}$,
and the tubular neighborhood identification~$(W_Y,\Psi_Y)$ for $Y\!\subset\!X$
via the construction of Subsection~\ref{glaugbl_subsubs} is isomorphic to 
the $\fc_1$-augmented blowup~$\BL_Y^{\fc_1}X$ obtained from the above $\fc_1$-augmented atlas
$\{\vph_{\al}\}_{\al\in\cI}$ cut up from~$(W_Y,\Psi_Y)$ via the construction
of Section~\ref{locaugbl_subs}.\\

\noindent
By Proposition~\ref{Loc2Glaug_prp} below, an $\cN^{\fc_1}$-augmented atlas 
$\{\vph_{\al}\}_{\al\in\cI}$ of charts  for~$Y$ in~$X$ can be assembled 
into a tubular neighborhood identification~$(W_Y,\Psi_Y)$ for $Y\!\subset\!X$
and an inner-product~$\lr{\cdot,\cdot}$ on~$\cN^{\fc_1}$
so that any $\fc_1$-augmented atlas cut out from~$(W_Y,\Psi_Y)$ is 
$\fc_1$-augmented equivalent to~$\{\vph_{\al}\}_{\al\in\cI}$ 
and thus the blowups obtained via the local construction
of Section~\ref{locaugbl_subs} from the two $\fc_1$-augmented atlases and 
the blowup obtained via the global construction of Subsection~\ref{glaugbl_subsubs} are isomorphic.
Furthermore, inner-products~$\lr{\cdot,\cdot}$ assembled from $\fc_1$-augmented equivalent atlases 
are conformally equivalent and tubular neighborhood identifications
assembled from such atlases are $(\cN^{\fc_1},\lr{\cdot,\cdot})$-equivalent.
This establishes a correspondence between the global and local blowup constructions
of Subsection~\ref{glaugbl_subsubs} and Section~\ref{locaugbl_subs}, respectively.

\begin{prp}\label{Loc2Glaug_prp}
Suppose $X$, $Y$, $\fc$, and $\fc_1$ are as in Lemma~\ref{AugBl_lmm},
\hbox{$\cN^{\fc_1}\!\subset\!\cN_XY$} is a subbundle of corank~$\fc_1$, and
$\{\vph_{\al}\}_{\al\in\cI}$ is an $\cN^{\fc_1}$-augmented atlas of charts for~$Y$ in~$X$.
There exist an inner-product~$\lr{\cdot,\cdot}$ on~$\cN^{\fc_1}$ and
a tubular neighborhood identification~$(W_Y,\Psi_Y)$ for~$Y\!\subset\!X$
such that any $\fc_1$-augmented atlas of charts cut out from~$(W_Y,\Psi_Y)$ and~$\lr{\cdot,\cdot}$
as above is $\fc_1$-augmented equivalent to~$\{\vph_{\al}\}_{\al\in\cI}$.
\end{prp}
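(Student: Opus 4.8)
The plan is to imitate the proof of Proposition~\ref{Loc2GlRC_prp}, producing the inner product $\lr{\cdot,\cdot}$ as a preliminary step and then assembling a tubular neighborhood identification by the same cover-and-merge procedure while tracking the extra augmented data. Each chart $\vph_\al$ of the $\cN^{\fc_1}$-augmented atlas determines, through the trivialization $\Phi_\al$ of~\eref{NXYisom_e}, an inner product $\lr{\cdot,\cdot}_\al$ on $\cN^{\fc_1}|_{U_{Y;\al}}$, namely the pullback of the standard inner product on $\R^{\fc_2}$ under the isomorphism $\cN^{\fc_1}|_{U_{Y;\al}}\!\to\!0^{\fc_1}\!\times\!\R^{\fc_2}$ induced by~$\Phi_\al$; this is available because $\cN^{\fc_1}$ is by definition the $\Phi_\al$-image of $0^{\fc_1}\!\times\!\R^{\fc_2}$. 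The second condition in~\eref{augatlcond_e} says precisely that $\lr{\cdot,\cdot}_\al\!=\!f_{\al\al'}\lr{\cdot,\cdot}_{\al'}$ on overlaps, so the $\lr{\cdot,\cdot}_\al$ are pairwise conformal. Averaging them against a partition of unity subordinate to $\{U_{Y;\al}\}$ yields a global inner product $\lr{\cdot,\cdot}$ on $\cN^{\fc_1}$ that is conformally equivalent to each $\lr{\cdot,\cdot}_\al$ on $U_{Y;\al}$, since at every point a positive combination of mutually conformal inner products is a positive multiple of each of them. I fix this $\lr{\cdot,\cdot}$ once and for all.

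I then call a tubular neighborhood identification \sf{$\fc_1$-augmented compatible} with $\{\vph_\al\}$ if every $\fc_1$-augmented chart cut out from it using $\lr{\cdot,\cdot}$ is $\fc_1$-augmented compatible with the atlas. For each $\al$ I form the local identification $(W_\al,\Psi_\al)$ over $U_{Y;\al}$ exactly as in the proof of Proposition~\ref{Loc2GlRC_prp}, and it is $\fc_1$-augmented compatible: a chart cut out from it has domain inside $U_\al$ and a $\lr{\cdot,\cdot}$-orthonormal, $\cN^{\fc_1}$-adapted trivialization, so its overlap with $\vph_\al$ satisfies~\eref{augatlcond_e}---the subbundle condition is built into $\Phi_\al$ and the conformal factor comes from the equivalence $\lr{\cdot,\cdot}\!\sim\!\lr{\cdot,\cdot}_\al$---and composing with the overlap of $\vph_\al$ and $\vph_{\al'}$ gives~\eref{augatlcond_e} against the whole atlas. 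As in Proposition~\ref{Loc2GlRC_prp}, I replace $\{(W_\al,\Psi_\al)\}$ by $N\!=\!m\!+\!1$ tubular neighborhood identifications $(W_i,\Psi_i)$ covering $Y$, each $U_{Y;i}$ being a disjoint union of small open sets lying in the $U_{Y;\al}$ and each $\Psi_i$ a restriction of the relevant $\Psi_\al$; each $(W_i,\Psi_i)$ is again $\fc_1$-augmented compatible. Just as in the real case, $\fc_1$-augmented compatible identifications are pairwise $(\cN^{\fc_1},\lr{\cdot,\cdot})$-equivalent, so the $(W_i,\Psi_i)$ are pairwise $(\cN^{\fc_1},\lr{\cdot,\cdot})$-equivalent.

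It remains to merge the $(W_i,\Psi_i)$ into a single identification over $Y$ by an augmented analog of Lemma~\ref{Loc2GlRCmerge_lmm}: given $(\cN^{\fc_1},\lr{\cdot,\cdot})$-equivalent identifications over $U_1$ and $U_2$, there is one over $U_1\!\cup\!U_2$ that is $(\cN^{\fc_1},\lr{\cdot,\cdot})$-equivalent to both. Applying this $N\!-\!1$ times produces the desired $(W_Y,\Psi_Y)$. Since $(W_Y,\Psi_Y)$ is $(\cN^{\fc_1},\lr{\cdot,\cdot})$-equivalent to each of the compatible $(W_i,\Psi_i)$, and since $(\cN^{\fc_1},\lr{\cdot,\cdot})$-equivalent identifications produce $\fc_1$-augmented equivalent atlases, any $\fc_1$-augmented atlas cut out from $(W_Y,\Psi_Y)$ is $\fc_1$-augmented equivalent to $\{\vph_\al\}$, which is the assertion.

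The main obstacle is this augmented merging lemma. I would adapt the interpolation in the proof of Lemma~\ref{Loc2GlRCmerge_lmm}, now choosing the bundle isomorphism $\wt\exp_Y$ of~\eref{wtexpcond_e0} to respect the splitting $\cN_XY\!=\!\cN'\!\oplus\!\cN^{\fc_1}$ and to restrict to a $\lr{\cdot,\cdot}$-conformal isomorphism of $\cN^{\fc_1}$. Because the transition $h$ between the two given identifications already maps $\cN^{\fc_1}$ into $\cN^{\fc_1}$ and is conformal there, by the two conditions of~\eref{augatlcond_e0}, the transported endomorphism $h_\th$ and the bump-function interpolation defining the merged $\Psi$ preserve $\cN^{\fc_1}$ and remain conformal on it. The delicate point, which I expect to be the crux, is to upgrade this to the two conditions of~\eref{augatlcond_e0} for the merged identification: one must produce a transition endomorphism preserving the entire fiber of $\cN^{\fc_1}$ and conformal on it, with an explicit positive factor $f$, rather than merely controlling the image of the interpolated vector. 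Granting the lemma, the proposition follows exactly as Proposition~\ref{Loc2GlRC_prp} follows from Lemma~\ref{Loc2GlRCmerge_lmm}.
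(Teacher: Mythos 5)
Your reduction of the proposition follows the paper's own proof almost line for line: the local inner-products $\lr{\cdot,\cdot}_{\al}$ pulled back via $\Phi_{\al}$, their pairwise conformality on overlaps from the second condition in~\eref{augatlcond_e}, a global inner-product $\lr{\cdot,\cdot}$ conformally equivalent to each $\lr{\cdot,\cdot}_{\al}$ (the paper merely asserts its existence; your partition-of-unity average is a valid way to produce it), the notion of $\fc_1$-augmented compatible identifications, the passage to $N\!=\!m\!+\!1$ pairwise $(\cN^{\fc_1},\lr{\cdot,\cdot})$-equivalent local identifications, and $N\!-\!1$ applications of a merging lemma. All of this matches the paper.

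The genuine gap is the merging lemma itself (the paper's Lemma~\ref{Loc2Glaugmerge_lmm}), which you explicitly defer (``granting the lemma\dots''), and the one mechanism you sketch for it would fail. You claim that the bump-function interpolation of Lemma~\ref{Loc2GlRCmerge_lmm} ``preserves $\cN^{\fc_1}$ and remains conformal on it.'' It does preserve $\cN^{\fc_1}$, but the linear interpolation $w\mapsto w+\eta\big(\{h_{\th}(v)\}(w)-w\big)$ between the identity and a conformal automorphism is not conformal in general: for $\fc_2\!\ge\!3$, take $\{h_{\th}(v)\}|_{\cN^{\fc_1}}$ to be a rotation in a $2$-plane; the convex combination contracts that plane while fixing its orthogonal complement pointwise, so it is not a scalar multiple of an orthogonal map. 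The missing idea---which is exactly what Lemma~\ref{Loc2Glaugmerge_lmm} supplies---is to interpolate \emph{multiplicatively in the conformal group} rather than linearly in the endomorphism bundle: after shrinking $W_{\cap}'$ to the open set $W_{\cap}''$ on which the normalized transition $f(v)^{-1/2}\{h_{\th}(v)\}|_{\pi_{TY}^*\cN^{\fc_1}}$ lies in the image of an exponential map $\exp_{\cN^{\fc_1}}\!:\so(\cN^{\fc_1})\!\lra\!\SO(\cN^{\fc_1})$, one writes it as $\exp_{\cN^{\fc_1}}\!\big(\wt{h}_{\th}(v)\big)$ and defines the interpolant $h_{\eta}(v)$ to act on $\pi_{\cN_XY}^*\cN^{\fc_1}$ by $f(v)^{\eta/2}\exp_{\cN^{\fc_1}}\!\big(\eta\,\wt{h}_{\th}(v)\big)$, keeping the linear interpolation only on the complementary subbundle~$\cN'$. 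Every interpolant is then an automorphism of $\cN^{\fc_1}$, conformal with the explicit factor $f(v)^{\eta/2}$---these are~\eref{hetaprop_e1} and~\eref{hetaprop_e2}---and this is precisely what makes the merged identification $(W',\Psi|_{W'})$ satisfy both conditions of~\eref{augatlcond_e0} against $(W_1,\Psi_1)$ and~$(W_2,\Psi_2)$. (The paper also takes $\wt\exp_Y$ to restrict to an isometry, not merely a conformal map, on $\cN^{\fc_1}$, so that the conformal factor remains the explicit~$f$.) Without this group-valued interpolation, your outline does not deliver the second condition of~\eref{augatlcond_e0} for the merged identification, and the proposition does not follow.
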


\begin{proof} We will call a chart $\vph\!:U\!\lra\!\R^{\fc+m}$ for~$Y$ in~$X$
\sf{$\fc_1$-augmented compatible} with the $\cN^{\fc_1}$-augmented atlas $\{\vph_{\al}\}_{\al\in\cI}$
if the union of this atlas with $\{\vph\}$ is still an $\fc_1$-augmented atlas for~$Y$ in~$X$.
We will call a tubular neighborhood identification~$(W_Y,\Psi_Y)$ for $Y\!\subset\!X$
over an open subset $U_Y\!\subset\!Y$ and an inner-product~$\lr{\cdot,\cdot}$ on~$\cN^{\fc_1}$
\sf{$\fc_1$-augmented compatible} with $\{\vph_{\al}\}_{\al\in\cI}$
if any coordinate chart for $Y\!\subset\!X$ cut out from~$(W_Y,\Psi_Y)$ as above is 
$\fc_1$-augmented compatible with $\{\vph_{\al}\}_{\al\in\cI}$.\\

\noindent
For each $\al\!\in\!\cI$, let $U_{\al}$, $U_{Y;\al}$, $\Phi_{\al}$, and $(W_{\al},\Psi_{\al})$
be as in the proof of Proposition~\ref{Loc2GlRC_prp} and $\lr{\cdot,\cdot}_{\al}$
be the inner-product on~$\cN^{\fc_1}|_{U_{Y;\al}}$ induced from the standard inner-product 
on~$0^{\fc_1}\!\times\!\R^{\fc_2}$ via~$\Phi_{\al}$.
By the second statement in~\eref{augatlcond_e},
\BE{augatlcond_e7}\lr{\cdot,\cdot}_{\al}\big|_{U_{Y;\al}\cap U_{Y;\al'}}
=f_{\al\al'}\!\circ\!\vph_{\al'}\big|_{U_{Y;\al}\cap U_{Y;\al'}}
\lr{\cdot,\cdot}_{\al'}\big|_{U_{Y;\al}\cap U_{Y;\al'}}\EE
for all $\al,\al'\!\in\!\cI$.
Furthermore, the overlap maps $h\!\equiv\!\wt{h}_{\al\al'}$ in~\eref{TNIequiv_e} 
with $\Psi_1\!\equiv\!\Psi_{\al}$, $\Psi_2\!\equiv\!\Psi_{\al'}$, 
$W_{\cap}\!\equiv\!\Psi_{\al'}^{-1}(U_{\al})$, and $\bF\!=\!\R$ satisfy
\BE{augatlcond_e9}\begin{aligned}
\big\{\!\wt{h}_{\al\al'}(v)\!\big\}\!\big(\cN^{\fc_1}|_{\pi_{\cN_XY}(v)}\big)
&\subset\cN^{\fc_1}_{\pi_{\cN_XY}(\Psi_{\al}^{-1}(\Psi_{\al'}(v)))}
&~~&\forall~v\!\in\!\Psi_{\al'}^{-1}(U_{\al}),\\
\blr{\!\{\wt{h}_{\al\al'}(v)\!\}(w),\{\wt{h}_{\al\al'}(v)\!\}(w)\!}_{\al}
&=f_{\al\al'}\big(\Psi_{\al'}(v)\!\big)\lr{w,w}_{\al'}
&~~&\forall~v\!\in\!\Psi_{\al'}^{-1}(U_{\al}),\,w\!\in\!\cN^{\fc_1}\big|_{\pi_{\cN_XY}(v)}.
\end{aligned}\EE

\vspace{.18in}

\noindent
By~\eref{augatlcond_e7}, we can choose an inner-product~$\lr{\cdot,\cdot}$ on~$\cN^{\fc_1}$
so that its restriction to $\cN^{\fc_1}|_{U_{Y;\al}}$ is conformally equivalent 
to~$\lr{\cdot,\cdot}_{\al}$ for every~$\al\!\in\!\cI$.
The tubular neighborhood identification~$(W_{\al},\Psi_{\al})$ is then
$\fc_1$-augmented compatible with the $\cN^{\fc_1}$-augmented atlas
$\{\vph_{\al'}\}_{\al'\in\cI}$.
By~\eref{augatlcond_e9}, any two such identifications are $(\cN^{\fc_1},\lr{\cdot,\cdot})$-equivalent.
As in the last paragraph of the proof of Proposition~\ref{Loc2GlRC_prp},
we can thus obtain a finite open cover $\{U_{Y;i}\}_{i\in[N]}$ of~$Y$ and
a collection $\{\!(W_i,\Psi_i)\!\}_{i\in[N]}$ of tubular neighborhood identifications
for $Y\!\subset\!X$ over $U_{Y;i}\!\subset\!Y$ so that 
each $(W_i,\Psi_i)$ is 
$\fc_1$-augmented compatible with $\{\vph_{\al'}\}_{\al'\in\cI}$
and any two of these identifications are $(\cN^{\fc_1},\lr{\cdot,\cdot})$-equivalent.
The claim now follows from Lemma~\ref{Loc2Glaugmerge_lmm} below applied $N\!-\!1$ times.
\end{proof}

\begin{lmm}\label{Loc2Glaugmerge_lmm}
Suppose $X$, $Y$, $\fc$, $\fc_1$, and $\cN^{\fc_1}$ are as in Proposition~\ref{Loc2Glaug_prp},
$\lr{\cdot,\cdot}$ is an inner-product on~$\cN^{\fc_1}$, and
$(W_1,\Psi_1)$ and $(W_2,\Psi_2)$ are tubular neighborhood identifications for $Y\!\subset\!X$
over open subsets $U_1\!\subset\!Y$ and $U_2\!\subset\!Y$, respectively.
If $(W_1,\Psi_1)$ and~$(W_2,\Psi_2)$ are 
$(\cN^{\fc_1},\lr{\cdot,\cdot})$-equivalent, then
there exists a tubular neighborhood identification~$(W,\Psi)$ for~$Y$ over \hbox{$U_1\!\cup\!U_2$} 
which is $(\cN^{\fc_1},\lr{\cdot,\cdot})$-equivalent to~$(W_1,\Psi_1)$ and~$(W_2,\Psi_2)$.
\end{lmm}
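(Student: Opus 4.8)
The plan is to follow the proof of Lemma~\ref{Loc2GlRCmerge_lmm} almost verbatim, interpolating between $\Psi_1$ and $\Psi_2$ with a bump function, and to modify only the fiberwise interpolation so that it respects the subbundle $\cN^{\fc_1}$ and the conformal class of~$\lr{\cdot,\cdot}$. First I would set up $\exp_Y$ as in~\eref{expcond_e}, but now choose the bundle isomorphism $\wt\exp_Y$ of~\eref{wtexpcond_e0} so that, in addition to~\eref{wtexpcond_e}, it carries $\pi_{TY}^*\cN^{\fc_1}$ onto $\exp_Y^*\cN^{\fc_1}$ and restricts to a fiberwise isometry of $(\cN^{\fc_1},\lr{\cdot,\cdot})$. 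Such a $\wt\exp_Y$ exists: take a connection on $\cN_XY$ that preserves the splitting $\cN_XY\!=\!\cN'\!\oplus\!\cN^{\fc_1}$ and restricts to a metric connection on $(\cN^{\fc_1},\lr{\cdot,\cdot})$, and define $\wt\exp_Y$ by parallel transport along the paths $t\!\mapsto\!\exp_Y(t\,\cdot)$. With this choice, the endomorphism $h_{\th}$ built from the $(\cN^{\fc_1},\lr{\cdot,\cdot})$-equivalence $h$ of~\eref{augatlcond_e0} inherits the two properties of~$h$: each $h_{\th}(v)$ preserves $\cN^{\fc_1}|_{\pi_{\cN_XY}(v)}$ and acts conformally on it with the same factor~$f(v)$.

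Next I would replace the straight-line interpolation $v\!+\!\eta(\pi_{\cN_XY}(v))(\{h_{\th}(v)\}(v)\!-\!v)$ used in Lemma~\ref{Loc2GlRCmerge_lmm} by a path that stays inside the subgroup of maps preserving $\cN^{\fc_1}$ and conformal on it. Writing elements of $\cN_XY\!=\!\cN'\!\oplus\!\cN^{\fc_1}$ as $(v',w)$, the properties above make $h_{\th}(v)$ block lower triangular, $h_{\th}(v)(v',w)\!=\!(h_{11}v',h_{21}v'\!+\!h_{22}w)$, with $h_{22}$ conformal. By the previous paragraph $h_{22}$ is conformal with factor~$f$, and by~\eref{TNIprop_e2} we have $h_{11}|_Y\!=\!\id$, $h_{21}|_Y\!=\!0$, and $h_{22}|_Y\!=\!\id$, so after shrinking the overlap neighborhood $W_{\cap}'$ I may assume $h_{22}\!=\!\sqrt{f}\,O$ with $O$ a smooth family of orthogonal maps taking values in a neighborhood of the identity on which $\log$ is defined. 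I would then set
\begin{equation*}
H_t(v)(v',w)=\big(\{(1\!-\!t)\id\!+\!t\,h_{11}\}v',\ t\,h_{21}v'+f^{t/2}\{\exp(t\log O)\}w\big),
\end{equation*}
so that $H_0\!=\!\id$, $H_1\!=\!h_{\th}(v)$, each $H_t(v)$ is block lower triangular with conformal $(2,2)$-block of factor~$f^t$, and its diagonal blocks are invertible near~$Y$. The new identification is then defined exactly as in Lemma~\ref{Loc2GlRCmerge_lmm}, with $v\!+\!\eta(\pi_{\cN_XY}(v))(\{h_{\th}(v)\}(v)\!-\!v)$ replaced by $\{H_{\eta(\pi_{\cN_XY}(v))}(v)\}(v)$, gluing $\Psi_1$ on $U_1\!-\!\ov{U_2'}$, $\Psi_2$ on $U_2\!-\!\ov{U_1'}$, and the interpolated formula on~$W_{\cap}'$.

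The verification then parallels the real case. Where $\eta\!=\!0$ the formula reduces to~$\Psi_1$ and where $\eta\!=\!1$ it reconstructs $\Psi_1\!\circ\!(\Psi_1^{-1}\!\circ\!\Psi_2)\!=\!\Psi_2$, so $\Psi$ is well defined and smooth; the identities~\eref{TNIprop_e2} give $\Psi|_Y\!=\!\id_Y$ and the required normal derivative identity, whence the Inverse Function Theorem produces a tubular neighborhood identification $(W',\Psi|_{W'})$ on a neighborhood of~$Y$. On $W_{\cap}'$ the overlap $\Psi_1^{-1}\!\circ\!\Psi$ is encoded by~$H_{\eta}$, which preserves $\cN^{\fc_1}$ and is conformal on it with factor~$f^{\eta}$; together with the identity overlap on $U_1\!-\!\ov{U_2'}$ this shows $(W',\Psi|_{W'})$ is $(\cN^{\fc_1},\lr{\cdot,\cdot})$-equivalent to~$(W_1,\Psi_1)$, and then equivalence to~$(W_2,\Psi_2)$ follows by transitivity from the hypothesis and the region where $\Psi\!=\!\Psi_2$, exactly as in the last paragraph of the proof of Lemma~\ref{Loc2GlRCmerge_lmm}.

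I expect the main obstacle to be the interpolation step: the linear interpolation of the real case does not preserve conformality of the $(2,2)$-block, since a convex combination of conformal maps need not be conformal. The content beyond Lemma~\ref{Loc2GlRCmerge_lmm} is therefore the multiplicative factorization $h_{22}\!=\!\sqrt{f}\,O$ and the contraction of~$O$ to the identity through the orthogonal group via the matrix logarithm. This is legitimate only after shrinking $W_{\cap}'$ to a neighborhood of~$Y$ on which $O$ stays near the identity, and the compatible choice of~$\wt\exp_Y$ is precisely what makes $h_{\th}$ conformal on~$\cN^{\fc_1}$ in the first place.
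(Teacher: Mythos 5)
Your proof is correct and follows essentially the same route as the paper's: choose $\wt\exp_Y$ to respect $\cN^{\fc_1}$ and its metric, observe that $h_{\th}$ then preserves $\cN^{\fc_1}$ and acts conformally on it, and replace the real case's linear interpolation on the $\cN^{\fc_1}$ block by the multiplicative one $f^{t/2}\exp(t\log O)$ while keeping linear interpolation on the complementary blocks. The only (cosmetic) difference is that the paper packages your matrix logarithm as an abstract fiberwise exponential $\exp_{\cN^{\fc_1}}\!:\so(\cN^{\fc_1})\!\lra\!\SO(\cN^{\fc_1})$ and cuts $W_{\cap}''$ down to the locus where $f^{-1/2}h_{\th}|_{\cN^{\fc_1}}$ lies in its image, which is exactly your shrinking of $W_{\cap}'$ to where $O$ stays near the identity.
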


\begin{proof} 
We denote by $\pi_{\SO}\!:\SO(\cN^{\fc_1})\!\lra\!Y$ the fiber bundle of orientation-preserving automorphisms
of~$(\cN^{\fc_1},\lr{\cdot,\cdot})$ and~by
$$\pi_{\so}\!:\so(\cN^{\fc_1})\subset T\big(\SO(\cN^{\fc_1})\!\big)\!\big|_{s_{\bI}(Y)}$$
the restriction of the vertical tangent bundle to the section~$s_{\bI}$ of~$\pi_{\SO}$
given by the identity automorphism of each fiber.
Let
$$\exp_{\cN^{\fc_1}}\!:\so(\cN^{\fc_1})\lra \SO(\cN^{\fc_1})$$
be a smooth map so that
$\exp_{\cN^{\fc_1}}\!|_{\so(\cN^{\fc_1})_y}$ is an embedding for every $y\!\in\!Y$,
\begin{gather*}
\pi_{\SO}\!\circ\!\exp_{\cN^{\fc_1}}=\pi_{\SO}\!\circ\!\pi_{\so}\!:\so(\cN^{\fc_1})\lra Y,
\quad \exp_{\cN^{\fc_1}}\!\big|_{s_{\bI}(Y)}=\id_{s_{\bI}(Y)},
\quad\hbox{and}\\
\nd_{s_{\bI}(y)}\exp_{\cN^{\fc_1}}\!=\!\id\!:
\so(\cN^{\fc_1})\!\big|_{s_{\bI}(y)}\lra 
\so(\cN^{\fc_1})\!\big|_{s_{\bI}(y)}\subset 
T_{s_{\bI}(y)}\big(\SO(\cN^{\fc_1})\!\big)~~\forall\,y\!\in\!Y.
\end{gather*}
Let $\cN'\!\subset\!\cN_XY$ be a subbundle complementing~$\cN^{\fc_1}$.\\

\noindent
Continuing with the notation and setup of the proof of Lemma~\ref{Loc2GlRCmerge_lmm},
choose a vector bundle isomorphism~$\wt\exp_Y$ in~\eref{wtexpcond_e0} 
which restricts to an isometry from~$\pi_{TY}^*\cN^{\fc_1}$ to~$\exp^*\cN^{\fc_1}$ 
with respect to~$\lr{\cdot,\cdot}$.
By~\eref{augatlcond_e0}, the section~$h_{\th}$ now satisfies
\begin{equation*}\begin{aligned}
\big\{\!h_{\th}(v)\!\big\}\!\big(\pi_{TY}^*\cN^{\fc_1}\big)&\subset\pi_{TY}^*\cN^{\fc_1}
&\qquad&\forall~v\!\in\!W_{\cap}',\\
\blr{\!\{h_{\th}(v)\!\}(w),\{h_{\th}(v)\!\}(w)\!}&=f(v)\lr{w,w}
&\qquad&\forall~v\!\in\!W_{\cap}',\,w\!\in\!\cN^{\fc_1}\big|_{\pi_{\cN_XY}(v)}.
\end{aligned}\end{equation*}
By~\eref{TNIprop_e2}, the subset
$$W_{\cap}''=\big\{v\!\in\!W_{\cap}'\!:
f(v)^{-1/2}\{h_{\th}(v)\!\}\!\big|_{\pi_{TY}^*\cN^{\fc_1}}\!\in\!
\exp_{\cN^{\fc_1}}\!\!\big(\so(\cN^{\fc_1})_{s_{\bI}(y)}\!\big)\!\big\}$$
is still an open neighborhood of $U_1\!\cap\!U_2$ in~$\cN_XY$.
Define 
$$\wt{h}_{\th}\in\Ga\big(W_{\cap}'',s_{\bI}^*\so(\cN^{\fc_1})\!\big)
\qquad\hbox{by}\quad
\exp_{\cN^{\fc_1}}\!\!\big(\wt{h}_{\th}(v)\!\big)
=f(v)^{-1/2}\{h_{\th}(v)\!\}\!\big|_{\pi_{TY}^*\cN^{\fc_1}}.$$
By~\eref{TNIprop_e2},
\BE{TNIprop_e4}
\wt{h}_{\th}(y)=0\!\equiv\!\big(y,s_{\bI}(y)\!\big)
\qquad\forall~y\!\in\!U_1\!\cap\!U_2.\EE

\vspace{.18in}

\noindent
As in the proof of Lemma~\ref{Loc2GlRCmerge_lmm}, we assume that \hbox{$U_1\!\cup\!U_2\!=\!Y$}
and choose open subsets $U_1',U_2'\!\subset\!Y$ and \hbox{$\eta\!\in\!C^{\i}(Y;\R)$} as before.
Define
\begin{gather*}
h_{\eta} \in\Ga\big(W_{\cap}'',\pi_{\cN_XY}^*\End_{\R}(\cN_XY)\!\big) \qquad\hbox{by}\\
\big\{\!h_{\eta}(v)\!\big\}(w)=\begin{cases}
w\!+\!\eta\big(\pi_{\cN_XY}(v)\!\big)\big(\{h_{\th}(v)\!\}(w)\!-\!w\big),
&\hbox{if}~w\!\in\!\pi_{\cN_XY}^*\cN';\\
f(v)^{\eta(\pi_{\cN_XY}(v)\!)/2}
\big\{\!\exp_{\cN^{\fc_1}}\!\!\big(\eta\big(\pi_{\cN_XY}(v)\!\big)\wt{h}_{\th}(v)\!\big)\!\big\}
(w),&\hbox{if}~w\!\in\!\pi_{\cN_XY}^*\cN^{\fc_1}.
\end{cases}\end{gather*}
In particular,
\begin{gather}\label{hetaprop_e1}
h_{\eta}(v)=\begin{cases}\id,&\hbox{if}~v\!\in\!U_1\!\cap\!U_2;\\
\id,&\hbox{if}~v\!\in\!W_1|_{U_1-\ov{U_2'}}\!\cap\!W_{\cap}'';\\
h_{\th}(v),&\hbox{if}~v\!\in\!W_{\cap}''\!\cap\!W_2|_{U_2-\ov{U_1'}};
\end{cases} \qquad
\big\{h_{\eta}(v)\!\big\}\!\big(\pi_{TY}^*\cN^{\fc_1}\big)\subset\pi_{TY}^*\cN^{\fc_1},\\
\label{hetaprop_e2}
\blr{\!\{h_{\eta}(v)\!\}(w),\{h_{\eta}(v)\!\}(w)\!}
=f(v)^{\eta(\pi_{\cN_XY}(v))/2}\lr{w,w}
~~\forall~w\!\in\!\cN^{\fc_1}\big|_{\pi_{\cN_XY}(v)}.
\end{gather}
The subset $W\!\equiv\!W_1|_{U_1-\ov{U_2'}}\!\cup\!W_{\cap}''\!\cup\!W_2|_{U_2-\ov{U_1'}}$
of~$\cN_XY$ is an open neighborhood of~$Y$ and the map
\begin{gather*}
\Psi\!:W\lra \cN_XY, \\
\Psi(v)=\begin{cases}
\Psi_1(v),&\hbox{if}~v\!\in\!W_1|_{U_1-\ov{U_2'}};\\
\Psi_1\big(\pi_2\big(\wt\exp_Y\big(\eta(\pi_{\cN_XY}(v)\!)\!\big)\th(v),
\big\{\!h_{\eta}(v)\!\big\}(v)\!\big)\!\big),&\hbox{if}~v\!\in\!W_{\cap}''\,;\\
\Psi_2(v),&\hbox{if}~v\!\in\!W_2|_{U_2-\ov{U_1'}};
\end{cases}
\end{gather*}
is well-defined and smooth.\\

\noindent
By~\eref{hetaprop_e1} and~\eref{hetaprop_e2}, $\Psi|_Y\!=\!\id_Y$ and the homomorphism
$$T(\cN_XY)\big|_Y \xlra{\nd\Psi} TX|_Y
\xlra{\pi_Y^{\perp}} \frac{TX|_Y}{TY}\!\equiv\!\cN_XY$$
restricts to the identity homomorphism on \hbox{$\cN_XY\!\subset\!T(\cN_XY)|_Y$}.
By the Inverse Function Theorem, there thus exists a neighborhood $W'\!\subset\!W$
of~$Y$ such that $(W',\Psi|_{W'})$ is a tubular neighborhood identification for $Y\!\subset\!X$.
By the same reasoning as at the end of the proof of  Lemma~\ref{Loc2GlRCmerge_lmm},
$(W',\Psi|_{W'})$ is $(\cN^{\fc_1},\lr{\cdot,\cdot})$-equivalent to~$(W_1,\Psi_1)$ 
and~$(W_2,\Psi_2)$.
\end{proof}

\vspace{.2in}

\noindent
{\it Department of Mathematics, Stony Brook University, Stony Brook, NY 11794\\
azinger@math.stonybrook.edu}

\end{document}